\def\ffi{\varphi}
\def\eps{\varepsilon}
\def\dst{\displaystyle}
\def\C{{\mathbb{C}}}
\def\R{{\mathbb{R}}}
\newcommand{\norm}[1]{{\left\|{#1}\right\|}}
\newcommand{\abs}[1]{{\left|{#1}\right|}}
\newcommand{\scal}[1]{{\left\langle{#1}\right\rangle}}
\newtheorem{lemma}{Lemma}[section]
\newtheorem{prop}[lemma]{Proposition}
\newtheorem{theorem}[lemma]{Theorem}
\newtheorem{coro}[lemma]{Corollary}
\newtheorem{rema}[lemma]{Remark}
\begin{document}

\title[Fourier transforms and rearrangements]{On the Fourier transform of the symmetric decreasing rearrangements}

\author{Philippe Jaming}

\address{Universit\'e d'Orl\'eans\\
Facult\'e des Sciences\\
MAPMO - F\'ed\'eration Denis Poisson\\ BP 6759\\ F 45067 Orl\'eans Cedex 2\\
France}
\email{Philippe.Jaming@univ-orleans.fr}
%
%
%
\begin{abstract}
Inspired by work of Montgomery 
on Fourier series
and Donoho-Strak 
in signal processing, we investigate two families 
of rearrangement inequalities for the Fourier transform. More precisely,
we show that the $L^2$ behavior of a Fourier transform of a function
over a small set is controlled by the $L^2$ behavior of the Fourier transform of its symmetric 
decreasing rearrangement. In the $L^1$ case, the same is true if we further assume that the 
function has a support of finite measure.

As a byproduct, we also give a simple proof and an extension of a result of Lieb about 
the smoothness of a rearrangement. Finally, a straightforward application to solutions of the free 
Shr\"odinger equation is given.
\end{abstract}

\subjclass{42A38;42B10;42C20;33C10}

\keywords{Fourier transform;rearrangement inequalities;Bessel functions}

\urladdr{http://jaming.nuxit.net/}


\maketitle

\section{Introduction}
The use of rearrangement techniques is a major tool for proving functional inequalities.
For instance, it has been used extensively for proving the boundedness
of the Fourier transform between weighted Lebesgue spaces ({\it see e.g.} \cite{JS1,JS2,BH} and 
the references therein). Let us mention that a weighted inequality for the Fourier transform 
was proved in
\cite{BH} with the help of a result of Jodeit-Torchinsky \cite{JT}
showing that an operator that is of type $(1,\infty)$ and of type $(2,2)$ satisfies
some rearrangement inequalities. 
%

Results mentioned so far deal with the rearrangement of Fourier transforms and not
with Fourier transforms of rearrangements. 
As the two operations are of course far from commuting, it is
thus not possible to deduce anything from them about the behavior of the Fourier transform of 
the rearrangement of a function. In that direction, a remarkable theorem is due to 
Lieb \cite[Lemma 4.1]{Li} that shows that the decreasing rearrangement preserves smoothness:

\begin{theorem}[Lieb for $s=1$ \cite{Li}, Donoho-Stark for $0<s<1$ \cite{DSu}]\label{th:lieb}
Let $d\geq 1$ be an integer and $0\leq s\leq 1$. Then there exists a constant $C_s$ such that,
for every $\ffi$ in the Sobolev space $H^s(\R^d)$
{\rm i.e.} 
$$
\norm{\ffi}_{H^s}:=\left(\int_{\R^d}|\widehat{\ffi}(\xi)|^2(1+|\xi|^2)^{s}\,\mathrm{d}\xi\right)^{1/2}<+\infty,
$$
$\bigl\||\ffi|^*\bigr\|_{H^s}\leq C_s\bigl\|\ffi\bigr\|_{H^s}$.
\end{theorem}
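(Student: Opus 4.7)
I would split the argument by the value of $s$. The case $s=0$ is trivial because rearrangement preserves the $L^2$ norm. The case $s=1$ is the classical Polya--Szeg\H{o} inequality $\norm{\nabla|\ffi|^*}_{L^2}\leq\norm{\nabla\ffi}_{L^2}$; combined with Plancherel's identity and the elementary equivalence $(1+|\xi|^2)^s\simeq 1+|\xi|^{2s}$ on $[0,1]$, this gives the result with $C_1$ an absolute constant. Before handling the remaining range I reduce to the case $\ffi\geq 0$: replacing $\ffi$ by $|\ffi|$ does not change $|\ffi|^*$ and does not increase $\norm{\ffi}_{H^s}$, by the pointwise contraction $\bigl||\ffi(x)|-|\ffi(y)|\bigr|\leq|\ffi(x)-\ffi(y)|$ inserted either into the Gagliardo fractional seminorm (for $0<s<1$) or into the gradient (for $s=1$).

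For $0<s<1$ I would rely on the heat-semigroup representation of the homogeneous Sobolev seminorm. Plancherel together with the identity
$$\int_0^\infty t^{-s-1}\bigl(1-e^{-t|\xi|^2}\bigr)\,\mathrm{d}t=C(s)\,|\xi|^{2s}\qquad(0<s<1)$$
gives
$$\norm{\ffi}_{\dot H^s}^2=\frac{1}{C(s)}\int_0^\infty t^{-s-1}\Bigl(\norm{\ffi}_{L^2}^2-\norm{e^{t\Delta/2}\ffi}_{L^2}^2\Bigr)\,\mathrm{d}t.$$
Writing the semigroup norm as a convolution
$$\norm{e^{t\Delta/2}\ffi}_{L^2}^2=\iint_{\R^d\times\R^d} G_t(x-y)\,\ffi(x)\ffi(y)\,\mathrm{d}x\,\mathrm{d}y,$$
with $G_t$ the positive, radial, strictly decreasing Gaussian kernel at time $t$, the Riesz rearrangement inequality applied to the triple $(\ffi,\ffi,G_t)$ yields
$$\norm{e^{t\Delta/2}\ffi}_{L^2}^2\leq\norm{e^{t\Delta/2}\ffi^*}_{L^2}^2.$$
Since $\norm{\ffi^*}_{L^2}=\norm{\ffi}_{L^2}$, the bracketed integrand in the representation formula does not increase when $\ffi$ is replaced by $\ffi^*$; integrating in $t$ against the weight $t^{-s-1}$ produces $\norm{\ffi^*}_{\dot H^s}\leq\norm{\ffi}_{\dot H^s}$, and combining with the $L^2$ equality concludes the proof.

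There is no real technical obstacle here: the proof is essentially a one-line consequence of Riesz rearrangement once the right spectral representation of $\norm{\cdot}_{\dot H^s}$ has been chosen. The only conceptual point is to notice that the heat flow dissipates $\ffi^*$ more slowly than it dissipates $\ffi$, which expresses the fact that symmetrization shifts spectral content towards low frequencies. The method breaks down at $s=1$ because $C(s)\to\infty$ as $s\uparrow 1$, which is exactly why Polya--Szeg\H{o} must be invoked separately at this endpoint; conversely, the range $s>1$ falls outside the scope of the statement and of this approach, since second-order rearrangement inequalities of the required type are known to fail in general.
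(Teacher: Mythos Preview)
Your proof is correct and shares its core with the paper's: both apply Riesz's rearrangement inequality with a Gaussian kernel to obtain
\[
\int_{\R^d} e^{-t|\xi|^2}\,|\widehat{\ffi}(\xi)|^2\,\mathrm{d}\xi
\;\leq\;
\int_{\R^d} e^{-t|\xi|^2}\,\bigl|\widehat{|\ffi|^*}(\xi)\bigr|^2\,\mathrm{d}\xi,
\]
subtract from the $L^2$ identity, and then pass to the weight $|\xi|^{2s}$.

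The difference lies only in this last passage. You integrate the reversed inequality directly against $t^{-s-1}\,\mathrm{d}t$, using the elementary Gamma identity $\int_0^\infty t^{-s-1}(1-e^{-t\lambda})\,\mathrm{d}t=C(s)\lambda^{s}$ valid for $0<s<1$; since this diverges at $s=1$ you treat the endpoint separately via the P\'olya--Szeg\H{o} inequality. The paper instead invokes Bernstein's theorem on complete monotonicity to write $e^{-\pi|\xi|^{2s}}$ as a positive superposition of Gaussians, obtaining $\int e^{-\pi t|\xi|^{2s}}|\widehat{\ffi}|^2\leq\int e^{-\pi t|\xi|^{2s}}|\widehat{|\ffi|^*}|^2$ for every $t>0$, and then lets $t\to 0$ in $t^{-1}(1-e^{-\pi t|\xi|^{2s}})$. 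That route handles the full range $0<s\leq 1$ uniformly and in fact \emph{re-proves} P\'olya--Szeg\H{o} rather than citing it (this is Lieb's original argument), at the price of the additional ingredient of Bernstein's theorem. Your version is therefore slightly more elementary for $0<s<1$ but less self-contained at $s=1$; both fail past $s=1$ for the reason you identify.
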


Further results relating smoothness and rearrangements may be found e.g. in papers by
A. Burchard \cite{Bu}, B. Kawohl \cite{Ka}. 
The best possible estimate
for second order derivatives was obtained by Cianchi \cite{Ci}. We will also show some sort of dual version of it,
namely that $\norm{\ffi}_{s}\leq C_s\norm{|\ffi|^*}_{s}$ if $s<0$.

In this paper, we are mainly dealing with a slightly different type of estimates. Namely, we will show that
the frequency content of the rearrangement of a function controls the frequency content of the function. We first show an ``$L^1$-result'' which may be stated as follows:

\begin{theorem}
\label{th1}
Let $S,\Omega>0$. Then there exists a constant $C=C(S,\Omega)$
such that, for every $\ffi\in L^1(\R^d)$ with support of finite measure $S$,
and for every $a,x\in\R^d$,
\begin{equation}
\label{eq:dsthm}
\abs{\int_{|\xi-a|\leq\Omega}\widehat{\ffi}(\xi)e^{2i\pi x\xi}\,\mathrm{d}\xi}\leq
C\int_{B(0,\Omega)}\widehat{|\ffi|^*}(\xi)\,\mathrm{d}\xi.
\end{equation}
\end{theorem}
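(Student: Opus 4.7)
The plan is to reduce \eqref{eq:dsthm} to a convolution inequality with the explicit Dirichlet--Bessel kernel
$$K_\Omega(y):=\int_{B(0,\Omega)} e^{2i\pi y\cdot\xi}\,d\xi,$$
the (real, radial) inverse Fourier transform of $\mathbf{1}_{B(0,\Omega)}$, and then apply the Hardy--Littlewood rearrangement inequality. First, one reduces to $a=0$: substituting $\psi(y):=\varphi(y)e^{-2i\pi a\cdot y}$ one has $|\psi|=|\varphi|$, hence $|\psi|^*=|\varphi|^*$, and the change of variable $\xi=a+\eta$ absorbs the translation up to a unimodular factor. For $a=0$ the left-hand side is the inverse Fourier transform of $\widehat\varphi\,\mathbf{1}_{B(0,\Omega)}$ at $x$, namely $(\varphi*K_\Omega)(x)$. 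Pulling the absolute value inside the convolution and applying Hardy--Littlewood to the pairing of $|\varphi|$ against the translated kernel $y\mapsto |K_\Omega|(x-y)$, whose symmetric decreasing rearrangement is $|K_\Omega|^*$ (translation invariance of the rearrangement), one obtains
$$\abs{\int_{B(0,\Omega)}\widehat\varphi(\xi)e^{2i\pi x\xi}\,d\xi}\le \int |\varphi|^*(y)\,|K_\Omega|^*(y)\,dy.$$

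Next, a Fubini computation identifies the right-hand side of \eqref{eq:dsthm} as $\int |\varphi|^*(y)\,K_\Omega(y)\,dy$. Let $R>0$ be defined by $|B(0,R)|=S$, so that $\supp|\varphi|^*\subset B(0,R)$. Both integrals reduce to $B(0,R)$, and the theorem reduces to the purely kernel-theoretic inequality
$$\int_{B(0,R)} |\varphi|^*(y)\,|K_\Omega|^*(y)\,dy \le C\int_{B(0,R)} |\varphi|^*(y)\,K_\Omega(y)\,dy.$$
Since $|\varphi|^*$ is radial decreasing and compactly supported, the layer-cake representation $|\varphi|^*=\int_0^\infty \mathbf{1}_{B(0,\rho(t))}\,dt$ (with $\rho(t)\le R$) shows that it suffices to prove the pointwise-in-$r$ estimate
$$\int_{B(0,r)}|K_\Omega|^*(y)\,dy\le C(R,\Omega)\int_{B(0,r)} K_\Omega(y)\,dy\quad \text{for every }r\in(0,R].$$

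The main obstacle is this last inequality, which involves the Bessel-type kernel $K_\Omega$ alone. It requires two ingredients: the \emph{positivity} of $\int_{B(0,r)}K_\Omega$ for every $r>0$ (so that the ratio makes sense), and a \emph{uniform upper bound} on the ratio for $r\in(0,R]$. Fubini gives the symmetric identity $\int_{B(0,r)}K_\Omega\,dy=\int_{B(0,\Omega)}\widehat{\mathbf{1}_{B(0,r)}}(\xi)\,d\xi$, which makes both facts transparent in low dimensions: one finds $(2/\pi)\mathrm{Si}(2\pi r\Omega)$ when $d=1$ and $1-J_0(2\pi r\Omega)$ when $d=2$, both manifestly positive. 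The scaling $K_\Omega(y)=\Omega^d K_1(\Omega y)$ means the ratio depends only on the product $R\Omega$, so the constant $C$ is indeed a function of $(S,\Omega)$ alone. For general $d$ the positivity and the quantitative bound should follow from direct analysis of $\int_0^{2\pi r\Omega}v^{d/2-1}J_{d/2}(v)\,dv$, which I expect to be the only genuinely computational step.
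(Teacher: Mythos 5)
Your approach is essentially the one the paper itself takes in Section 4 (where Theorem 1.2 is derived as the special case $\alpha=\beta=0$ of the more general Bochner--Riesz Theorem 4.3): use Parseval to move the pairing to the physical side, apply the Hardy--Littlewood rearrangement inequality to pass to $|\ffi|^*$ paired against $|K_\Omega|^*$, use the layer-cake decomposition of the radial decreasing function $|\ffi|^*$ to reduce to a one-parameter family of kernel inequalities on balls $B(0,r)$, and finally control partial integrals of Bessel kernels. Your reduction, the identification of both sides as pairings of $|\ffi|^*$ against $|K_\Omega|^*$ and $K_\Omega$ respectively, and the explicit formulas for $d=1,2$ (via $\mathrm{Si}$ and $1-J_0$) are all correct.

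The genuine gap, which you have honestly flagged but understated, is the final step: positivity and a uniform lower bound on $\int_0^x v^{d/2-1}J_{d/2}(v)\,dv$. This is \emph{not} a routine computation that ``should follow from direct analysis.'' In the variable $\gamma=1-d/2$ the integral is $\int_0^x r^{-\gamma}J_{d/2}(r)\,dr$, and the positivity rests on the classical fact (Cooke, Makai, Steinig) that the successive arcs of $J_\nu$ between consecutive zeros have strictly decreasing areas, so the alternating partial integrals stay bounded away from zero; the paper proves this for $\gamma\geq 0$ from Makai's monotonicity result and then quotes Gasper and Askey--Steinig to push into negative $\gamma$. Note that $\gamma\geq 0$ only holds for $d\leq 2$, which is exactly why your explicit checks for $d=1,2$ were painless; for $d\geq 3$ one is in the regime $\gamma<0$, where Makai's argument alone does not apply and one needs the Gasper-type extension. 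So the proof as written is complete only for $d=1,2$; for higher $d$ you must either invoke these Bessel positivity results explicitly or supply the Sturm--Liouville argument that makes them work. You also need the matching upper bound $\int_0^r |K_\Omega|^*\lesssim (1+r)^{(d-1)/2}$, which follows from the decay $|\mathcal J_{d/2}(t)|\lesssim(1+t)^{-(d+1)/2}$ together with inequality \eqref{eq:newrear}, and this polynomial growth is what forces the constant $C$ to depend on $S$; your proposal implicitly uses this but never states the estimate.
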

This theorem is inspired and generalizes a result of Donoho and Stark which states that, for $d=1$,
$C(S,\Omega)=1$ provided $\Omega S$ is small enough, a result that we will generalize to higher dimension. One does not expect $C(S,\Omega)$ to be bounded when $S\to+\infty$ so that
the hypothesis on the support of $\ffi$ may not be lifted. Nevertheless, we show that the result
can be somewhat improved by taking Bochner-Riesz means.

We also show that in the ``$L^2$-case'' the situation is better:

\begin{theorem}\label{th:th1intro}
Let $d$ be an integer. Then there exists a constant $\kappa_d$ such that,
for every set $\Sigma\subset\R^{d}$ of finite positive measure and every $\ffi\in L^2(\R^d)$, 
$$
\int_\Sigma|\widehat{\ffi}(\xi)|^2\,\mathrm{d}\xi\leq 
\kappa_d\int_{B(0,\tau)}\abs{\widehat{|\ffi|^*}(\xi)}^2\,\mathrm{d}\xi,
$$
where $\tau$ is such that $|B(0,\tau)|=|\Sigma|$.
\end{theorem}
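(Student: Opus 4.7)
The plan is to re-express both sides as Parseval integrals and then exploit the Riesz--Sobolev and Hardy--Littlewood rearrangement inequalities. By a density argument I would restrict attention to $\ffi\in L^1\cap L^2(\R^d)$. Setting $\widetilde\ffi(x)=\overline{\ffi(-x)}$ and using $|\widehat\ffi|^2=\widehat{\ffi*\widetilde\ffi}$, Parseval's identity gives
$$\int_\Sigma\abs{\widehat\ffi(\xi)}^2\,\mathrm d\xi=\int_{\R^d}\widehat{\mathbf 1_\Sigma}(x)\,(\ffi*\widetilde\ffi)(x)\,\mathrm dx,$$
and, since $\widehat{|\ffi|^*}$ is real,
$$\int_{B(0,\tau)}\abs{\widehat{|\ffi|^*}(\xi)}^2\,\mathrm d\xi=\int_{\R^d}\widehat{\mathbf 1_{B(0,\tau)}}(x)\,\bigl(|\ffi|^**|\ffi|^*\bigr)(x)\,\mathrm dx,$$
the latter being real and nonnegative.

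Next I would take absolute values in the first identity, use the pointwise bound $|\ffi*\widetilde\ffi|\le|\ffi|*|\ffi|_-$ with $|\ffi|_-(x)=|\ffi(-x)|$, and apply the three-function Riesz--Sobolev rearrangement inequality. Since $(|\ffi|)^*=(|\ffi|_-)^*=|\ffi|^*$, this produces
$$\int_\Sigma\abs{\widehat\ffi}^2\le\int_{\R^d}\abs{\widehat{\mathbf 1_\Sigma}}^*(x)\,\bigl(|\ffi|^**|\ffi|^*\bigr)(x)\,\mathrm dx.$$
Writing $g:=|\ffi|^**|\ffi|^*$---a nonnegative, radial, radially decreasing function whose Fourier transform $\widehat g=\bigl|\widehat{|\ffi|^*}\bigr|^2$ is nonnegative---the theorem thus reduces to the kernel comparison
$$\int_{\R^d}\abs{\widehat{\mathbf 1_\Sigma}}^*\,g\;\le\;\kappa_d\int_{\R^d}\widehat{\mathbf 1_{B(0,\tau)}}\,g. \qquad (\star)$$

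Finally, a layer-cake decomposition of $g$ as a superposition of indicators of centred balls reduces $(\star)$ to the family of cumulative estimates
$$\int_{B(0,R)}\abs{\widehat{\mathbf 1_\Sigma}}^*(x)\,\mathrm dx\;\le\;\kappa_d\int_{B(0,R)}\widehat{\mathbf 1_{B(0,\tau)}}(x)\,\mathrm dx,$$
for $R>0$ in the range of level radii of $g$. The RHS can then be reshaped via the duality $\int_{B(0,R)}\widehat{\mathbf 1_{B(0,\tau)}}=\int_{B(0,\tau)}\widehat{\mathbf 1_{B(0,R)}}$ into an explicit Bessel-kernel integral over $B(0,\tau)$, while the LHS admits the elementary bound $\int_{B(0,R)}\abs{\widehat{\mathbf 1_\Sigma}}^*\le\min\bigl(|\Sigma|\,|B(0,R)|,\sqrt{|\Sigma|\,|B(0,R)|}\bigr)$ obtained from $\abs{\widehat{\mathbf 1_\Sigma}}\le|\Sigma|$ and Cauchy--Schwarz.

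The hard part will be this final cumulative estimate: matching the above bounds on $\abs{\widehat{\mathbf 1_\Sigma}}^*$ against the Bessel-kernel integral uniformly in $R$ and in the set $\Sigma$, with constant depending only on dimension, and in particular handling the transition $R\tau\sim 1$ between the essentially-flat regime and the oscillatory regime of $\widehat{\mathbf 1_{B(0,\tau)}}$. The positivity $\widehat g\ge 0$---which distinguishes convolution squares $|\ffi|^**|\ffi|^*$ from general radial decreasing functions---should be essential here, since it is exactly what allows the sign changes of $\widehat{\mathbf 1_{B(0,\tau)}}$ to be absorbed into the nonnegative mass $\int_{B(0,\tau)}\widehat g$ on the RHS of~$(\star)$.
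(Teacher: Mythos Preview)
Your reduction to $(\star)$ via Riesz--Sobolev is clean, but the layer-cake step that follows is a genuine wrong turn: the cumulative estimates you reduce to are simply false for large $R$, and no dimensional constant can save them. Already for $\Sigma=B(0,\tau)$ in dimension $d=1$ one has
\[
\int_{-R}^{R}\bigl|\widehat{\mathbf 1_{[-\tau,\tau]}}\bigr|^*(x)\,\mathrm dx\;\gtrsim\;\log(\tau R)
\qquad\text{while}\qquad
\int_{-R}^{R}\widehat{\mathbf 1_{[-\tau,\tau]}}(x)\,\mathrm dx=\int_{-R}^{R}\frac{\sin 2\pi\tau x}{\pi x}\,\mathrm dx\longrightarrow 1,
\]
so the ratio is unbounded; in higher dimension the left side grows like $(\tau R)^{(d-1)/2}$ (from the $|x|^{-(d+1)/2}$ decay of the rearranged Bessel kernel) while the right side stays bounded or oscillates with strictly smaller amplitude. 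The reason is exactly what you flag in your last paragraph: the layer-cake decomposition tests $(\star)$ against \emph{all} radial decreasing $g=\mathbf 1_{B(0,R)}$, not only positive-definite ones, and for those the inequality is hopeless. So either you must abandon the layer-cake reduction entirely and exploit $\widehat g\ge 0$ in a different, concrete way---which you have not done---or the whole route through $|\widehat{\mathbf 1_\Sigma}|^*$ has to be rethought. As it stands, your argument would at best recover a constant depending on the support of $\ffi$, which is the $L^1$ theorem of the paper rather than the $L^2$ one.

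The paper's proof proceeds along completely different lines, avoiding Riesz--Sobolev. It splits $\ffi=\ffi\mathbf 1_{\{|\ffi|>\bar\lambda\}}+\ffi\mathbf 1_{\{|\ffi|\le\bar\lambda\}}$ at the level $\bar\lambda$ for which the superlevel set has measure $|B(0,\tau^{-1})|$; the Fourier transform of the first piece is bounded in $L^\infty$ by $\int_{B(0,\tau^{-1})}|\ffi|^*$, and that of the second in $L^2$ by $\int_{|x|\ge\tau^{-1}}|\ffi|^{*2}$. This yields
\[
\int_\Sigma|\widehat\ffi|^2\;\lesssim\;|B(0,\tau)|\Bigl(\int_{B(0,\tau^{-1})}|\ffi|^*\Bigr)^2+\int_{|x|\ge\tau^{-1}}|\ffi|^{*2}.
\]
For the matching lower bound on the right-hand side of the theorem, the paper inserts the Fej\'er-type kernel $K_\tau=\tfrac{1}{c_d}\,\chi_{B(0,\tau/2)}\!*\!\chi_{B(0,\tau/2)}\le\mathbf 1_{B(0,\tau)}$, whose inverse transform is a nonnegative square $k_\tau^2$, and estimates the resulting double integral $\iint|\ffi|^*(x)|\ffi|^*(y)k_\tau^2(x-y)$ from below on two regions of $(x,y)$ to recover exactly the two terms above. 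No rearrangement of $\widehat{\mathbf 1_\Sigma}$ ever enters.
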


The similar estimate for Fourier series was previously obtained by Montgomery \cite{Mo}.
The result of Montgomery has also been generalized to Fourier transforms in a different way in
\cite{JS1,JS2} where rearrangements of Fourier transforms are considered rather than
Fourier transforms of rearrangements as in Theorem \ref{th:th1intro}.

Theorem \ref{th:th1intro} should be compared to Theorem \ref{th:lieb}.
Both theorems state that one can control one of the Fourier transforms
of $\ffi$ or of  $\ffi^*$ by the other one in a weighted $L^2$-sense.
Our results show that if the weight is ``small'' ({\it e.g.} the characteristic function of a set of 
finite measure), then $\widehat{\ffi^*}$ controls  $\widehat{\ffi}$. In Lieb's result, the weight is 
``big'' and the control goes the opposite way. We conjecture that this is also the case
when the weight is the characteristic function of the complementary of a set of finite 
measure, at least when $\ffi$ has a support of finite measure. We will show how 
this would imply
an optimal generalization to higher dimension of an uncertainty principle proved by F. Nazarov in
the one-dimensional case.

\medskip

Further, Theorem \ref{th:th1intro} may be interpreted in the following way: assume that
$\ffi$ has a big ``high-frequency component'', in the sense that
$\dst\int_\Sigma|\widehat{\ffi}(\xi)|^2\,\mathrm{d}\xi$ stays large
for all $\Sigma$ in a set of balls of radius $1$ and centered away from $0$,
then $\ffi^*$ must be big near to $0$. In other words, high-frequency oscillations 
are ``pushed'' to low-frequency oscillations by symmetrization. 

\bigskip

The paper is organized as follows. In the next section, we introduce the necessary notation
and prove a few simple preliminary lemmas. In Section 3, we prove Theorem \ref{th:lieb}
and its ``dual'' version.
The following section is devoted to the proof of Theorem \ref{th1}.
We pursue with the generalization of Montgomery's Theorem and present a conjecture related to Nazarov's Uncertainty 
Principle. We then give a direct illustration of the result in terms of solutions of the free Shr\"odinger equation. Finally, we explain how to extend our results to other symmetrizations. 

\section{Preliminaries and Notations}
\subsection{Generalities}\label{sec:gen}
Throughout this paper, $d$ will be an integer, $d\geq 1$. On $\R^d$, we denote by
$\scal{.,.}$ and $|.|$ the standard scalar product and the associated norm.
For $a\in\R^d$ and $r>0$, we denote by $B(a,r)$ the open ball centered at $a$ of radius $r$:
$B(a,r)=\{x\in\R^d\,:\ |x-a|<r\}$.

The Lebesgue measure on $\R^d$ is denoted $\mbox{d}x$ and we write
$|E|$ for the Lebesgue measure of a Borel set $E$. The various meanings
of $|A|$ should be clear from the context.
%

The Fourier transform of $\ffi\in L^1(\R^d)$ is defined by
$$
\widehat{\ffi}(\xi)=\int_{\R^d}\ffi(t)e^{-2i\pi\scal{t,\xi}}\,\mbox{d}t.
$$
This definition is extended from $L^1(\R^d)\cap L^2(\R^d)$ to $L^2(\R^d)$ in the standard way.
The inverse Fourier transform is denoted $\check\ffi$.
%

\subsection{Bessel functions and Fourier transforms}
Results in this section can be found in most books on Fourier
analysis, for instance \cite[Appendix B]{Gr}.

Let $\lambda$ be a real number with $\lambda>-1/2$. We define the
Bessel function $J_\lambda$ of order $\lambda$ on $(0,+\infty)$ by
its \emph{Poisson representation formula}
$$
J_\lambda(x)=\frac{x^\lambda}{2^\lambda
\Gamma\left(\lambda+\frac{1}{2}\right)\Gamma\left(\frac{1}{2}\right)}
\int_{-1}^1 (1-s^2)^\lambda\cos sx\frac{\mbox{d}s}{\sqrt{1-s^2}}.
$$
Let us define $\mathcal{J}_{-1/2}(x)=\cos x$ and for $\lambda>-1/2$,
$\mathcal{J}_\lambda(x):=\frac{J_\lambda(x)}{x^\lambda}$. Then
$\mathcal{J}_\lambda$ satisfies $|\mathcal{J}_\lambda(x)|\leq C_\lambda (1+|x|)^{-\lambda-1/2}$. 
It is also known that $\mathcal{J}_\lambda$, $\lambda>-1/2$, has
only positive real simple zeroes $(j_{\lambda,k})_{k\geq 1}$.


We will need the following well-known result:
\begin{equation}
\label{eq:fourboule}
\widehat{\chi_{B(0,1)}}(\xi)=\frac{J_{\frac{d}{2}}(2\pi|\xi|)}{|\xi|^{\frac{d}{2}}}
\end{equation}
from which we deduce that $|\widehat{\chi_{B(0,1)}}(\xi)|\leq C(1+|\xi|)^{-\frac{d+1}{2}}$
thus $\widehat{\chi_{B(0,1)}}\in L^p(\R^d)$ for all $p>\frac{2d}{d+1}$.

More generally, if we denote by $m_\alpha(x)=(1-|x|^2)^\alpha_+$, then
$$
\widehat{m_\alpha}(\xi)=\frac{\Gamma(\alpha+1)}{\pi^\alpha}
\frac{J_{\frac{d}{2}+\alpha}(2\pi|\xi|)}{|\xi|^{\frac{d}{2}+\alpha}}
=2^{\frac{d}{2}+\alpha}\pi^{\frac{d}{2}}\Gamma(\alpha+1)\mathcal{J}_{\frac{d}{2}+\alpha}(2\pi|\xi|).
$$

\subsection{Rearrangements}
For a Borel subset $E$ of $\R^d$ of finite measure,
we define its $d$-dimensional symmetric rearrangement $E^*$ to be the
open ball of $\R^d$ centered at the origin whose volume is that of $E$. Thus
$$
E^*=B(0,r)\quad\mbox{with}\quad
|B(0,1)|r^d=|E|.
$$

Let $\ffi$ be a measurable function on $\R^d$. We will say that $\ffi$ vanishes at infinity
if its level sets have finite measure\,: {\it i.e.}
the \emph{distribution function} of $\ffi$, $\mu_\ffi(\lambda)=|\{x\in\R^d\,:\ |\ffi(x)|>\lambda\}|$,
is finite for all $\lambda>0$. This is of course the case if $\ffi\in L^p$ for some $p\geq 1$.
We define the symmetric rearrangement $|\ffi|^*$
via the \emph{level-cake} representation\,:
$$
|\ffi|^*(x)
=\int_0^{+\infty}\chi_{\{y\in\R^d\,:\ |\ffi(y)|>\lambda\}^*}(x)\,\mbox{d}\lambda
$$
which has to be compared with the level-cake representation of $|\ffi|$\,:
$$
|\ffi(x)|=\int_0^{+\infty}\chi_{\{y\in\R^d\,:\ |\ffi(y)|>\lambda\}}(x)\,\mbox{d}\lambda.
$$

Rearrangements satisfy many useful properties: 

--- $|\alpha \ffi|^*=|\alpha||\ffi|^*$.

--- For a set $E$ of finite measure and for $\alpha>0$, 
$(\alpha E)^*=\alpha E^*$. Therefore, if we 
write $\ffi_\alpha(x)=\ffi(x/\alpha)$, then $|\ffi_\alpha|^*(x)=|\ffi|^*(x/\alpha)$.

--- $\ffi$ and $|\ffi|^*$ are equimeasurable, that is, for all $\lambda>0$
$\mu_\ffi(\lambda)=\mu_{|\ffi|^*}(\lambda)$. In particular, $\ffi$ and $|\ffi|^*$
have same $L^p$ norm for $1\leq p\leq\infty$.

--- The following theorems of Hardy, Littlewood and Riesz will be usefull 
({\it see e.g.} \cite[Chapter 3]{LL} for a proof):

\begin{lemma}[Hardy-Littlewood]\label{lem:rear2}
Let $\ffi,\psi$ be non-negative functions vanishing at infinity, then
$$
\int_{\R^d}\ffi(x)\psi(x)\,\mathrm{d}x\leq
\int_{\R^d}|\ffi|^*(x)|\psi|^*(x)\,\mathrm{d}x.
$$
\end{lemma}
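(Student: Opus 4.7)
The plan is to reduce the inequality to a purely set-theoretic statement by means of the layer-cake representation, and then verify that statement by a one-line computation using the fact that the symmetric rearrangements $A^*,B^*$ of two sets of finite measure are concentric balls.

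First I would apply the level-cake decomposition
\begin{equation*}
\ffi(x)=\int_0^{+\infty}\chi_{\{\ffi>\lambda\}}(x)\,\mathrm{d}\lambda,\qquad
\psi(x)=\int_0^{+\infty}\chi_{\{\psi>\mu\}}(x)\,\mathrm{d}\mu
\end{equation*}
(both functions are non-negative, so we may drop the absolute values). Multiplying and using Tonelli's theorem, together with the identity $\chi_A\chi_B=\chi_{A\cap B}$, gives
\begin{equation*}
\int_{\R^d}\ffi(x)\psi(x)\,\mathrm{d}x
=\int_0^{+\infty}\!\!\int_0^{+\infty}\abs{\{\ffi>\lambda\}\cap\{\psi>\mu\}}\,\mathrm{d}\lambda\,\mathrm{d}\mu.
\end{equation*}
The same manipulation applied to $|\ffi|^*$ and $|\psi|^*$ yields the analogous identity on the right-hand side, so the whole statement reduces to the pointwise (in $\lambda,\mu$) inequality
\begin{equation*}
\abs{\{\ffi>\lambda\}\cap\{\psi>\mu\}}\leq\abs{\{\ffi>\lambda\}^*\cap\{\psi>\mu\}^*}.
\end{equation*}

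The key step, which replaces any delicate rearrangement machinery, is the trivial observation that for any two measurable sets $A,B$ of finite measure one has $|A\cap B|\leq\min(|A|,|B|)$, while the rearrangements $A^*$ and $B^*$ are both balls centered at the origin. Consequently $A^*\cap B^*$ is itself a ball, equal to the smaller of $A^*,B^*$, and
\begin{equation*}
\abs{A^*\cap B^*}=\min(|A^*|,|B^*|)=\min(|A|,|B|)\geq\abs{A\cap B}.
\end{equation*}
Applying this with $A=\{\ffi>\lambda\}$ and $B=\{\psi>\mu\}$ (both of finite measure, since $\ffi,\psi$ vanish at infinity), and integrating in $\lambda,\mu$, finishes the proof.

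There is really no technical obstacle: the only point that must be stated carefully is that the finiteness-at-infinity hypothesis is used precisely so that $\{\ffi>\lambda\}$ and $\{\psi>\mu\}$ have finite measure for every $\lambda,\mu>0$, ensuring the rearrangements are well defined. The interchange of integrals is legitimate because all integrands are non-negative.
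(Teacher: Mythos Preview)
Your argument is correct and is precisely the standard proof of the Hardy--Littlewood inequality via the layer-cake representation. Note that the paper does not actually supply a proof of this lemma: it simply states the result and refers to \cite[Chapter 3]{LL}. The proof given there is essentially the one you wrote, so there is nothing to compare; your proposal fills in what the paper leaves to the reference.

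One small point worth making explicit (you use it implicitly when you say ``the same manipulation applied to $|\ffi|^*$ and $|\psi|^*$''): the identity $\{|\ffi|^*>\lambda\}=\{\ffi>\lambda\}^*$ up to a null set follows because $|\ffi|^*$ is radially decreasing, so its superlevel sets are centered balls, and by equimeasurability they have the same measure as the corresponding superlevel sets of $\ffi$.
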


\begin{lemma}[Riesz]\label{lem:riesz}
Let $\ffi,\psi,\chi$ be non-negative functions that vanish at infinity. Then
$$
\int_{\R^d}\int_{\R^d}\ffi(s)\psi(t)\chi(s-t)\,\mbox{d}s\,\mbox{d}t\leq
\int_{\R^d}\int_{\R^d}|\ffi|^*(s)|\psi|^*(t)|\chi|^*(s-t)\,\mbox{d}s\,\mbox{d}t
$$
\end{lemma}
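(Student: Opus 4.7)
The plan is to reduce the trilinear inequality to the case of characteristic functions via the layer-cake representation, and then to establish the characteristic-function version by Steiner symmetrization.

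First, I would write each of $\ffi,\psi,\chi$ in layer-cake form as $\ffi(s)=\int_0^{+\infty} \chi_{A_\alpha}(s)\,\mathrm{d}\alpha$ with $A_\alpha=\{\ffi>\alpha\}$, and similarly $B_\beta,C_\gamma$ for $\psi,\chi$ (here $\chi_{A_\alpha}$ denotes the characteristic function of $A_\alpha$). Since $|\ffi|^*=\int_0^{+\infty} \chi_{(A_\alpha)^*}\,\mathrm{d}\alpha$ by the very definition of the rearrangement recalled in this section, applying Fubini--Tonelli on both sides reduces the claim to proving, for all Borel sets $A,B,C\subset\R^d$ of finite measure,
$$
I(A,B,C):=\int_{\R^d}\int_{\R^d}\chi_A(s)\chi_B(t)\chi_C(s-t)\,\mathrm{d}s\,\mathrm{d}t\leq I(A^*,B^*,C^*).
$$

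Next, I would introduce Steiner symmetrization. Given a hyperplane $H$ through the origin, the Steiner symmetrization $E_H$ of a measurable set $E$ of finite measure replaces each one-dimensional section of $E$ orthogonal to $H$ by the centered interval of the same length. The key step is to prove $I(A,B,C)\leq I(A_H,B_H,C_H)$. Decomposing coordinates as $s=s'+s''e$, $t=t'+t''e$ with $s',t'\in H$ and $e$ a unit normal to $H$, and integrating over $s',t'$ first, this reduces to the one-dimensional version: for $U,V,W\subset\R$ of finite measure,
$$
\int_\R\int_\R \chi_U(s)\chi_V(t)\chi_W(s-t)\,\mathrm{d}s\,\mathrm{d}t\leq \int_\R\int_\R \chi_{U^*}(s)\chi_{V^*}(t)\chi_{W^*}(s-t)\,\mathrm{d}s\,\mathrm{d}t.
$$
The one-dimensional inequality I would establish first for finite unions of intervals via the classical sliding argument (sliding pieces of $U,V,W$ toward the origin preserves measure and cannot decrease the triple overlap), and then extend to arbitrary measurable sets by approximation and continuity of $I$.

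Finally, I would conclude by iteration: a suitable sequence of Steiner symmetrizations applied simultaneously to $A,B,C$ converges in $L^1$ to the symmetric decreasing rearrangements $A^*,B^*,C^*$. Since the functional $I$ can be rewritten as $\int_{\R^d} (\chi_A\ast\chi_{-B})(u)\chi_C(u)\,\mathrm{d}u$ and is continuous under $L^1$ convergence of the sets, the monotonicity of $I$ under each Steiner symmetrization then yields $I(A,B,C)\leq I(A^*,B^*,C^*)$, which combined with the layer-cake reduction gives the lemma. The hard part will be precisely this last convergence step: while the one-dimensional base case and the dimensional reduction via Steiner symmetrization are conceptually clean, ensuring that an appropriately chosen sequence of hyperplanes drives the iterated Steiner symmetrizations to the full symmetric decreasing rearrangement requires a compactness argument---extracting a convergent subsequence in $L^1$ and identifying the limit as spherically symmetric via its distribution function.
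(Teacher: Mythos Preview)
The paper does not actually prove this lemma: it is stated as a known result with a reference to \cite[Chapter 3]{LL} (Lieb--Loss, \emph{Analysis}) for the proof. Your outline is precisely the classical argument given in that reference---layer-cake reduction to characteristic functions, reduction to dimension one via Steiner symmetrization, the one-dimensional case, and then passage to the limit through iterated Steiner symmetrizations---so there is nothing to compare on the paper's side beyond noting that your approach matches the cited source.

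Your sketch is correct in its structure. The only point that deserves a word of caution is the one you yourself flag: the convergence of iterated Steiner symmetrizations to the ball. In Lieb--Loss this is handled by a compactness argument (Helly/weak-$*$ plus equimeasurability) together with the observation that any limit must be invariant under all the rotations generated by the chosen sequence of hyperplanes; choosing the directions to be dense on the sphere forces radial symmetry. Your continuity claim for $I$ under $L^1$ convergence of the sets is fine once you note that $\chi_A*\chi_{-B}$ is bounded in $L^\infty$ by $\min(|A|,|B|)$ and that convolution is continuous from $L^1\times L^1$ to $L^1$, so the trilinear form is indeed continuous along sequences with fixed measures.
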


We will also need the following result which is probably well known:

\begin{lemma}\label{lem:estrear}
If $\ffi\in L^p(\R^d)$, then 
$|\ffi|^*(t)\leq\dst \frac{\norm{\ffi}_p}{|B(0,1)|^{1/p}|t|^{d/p}}$ for all $t\in\R^d$.
\end{lemma}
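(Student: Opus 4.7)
The plan is to exploit three facts about the symmetric decreasing rearrangement that have already been recorded in the paper: (i) $|\varphi|^*$ is radial and radially non-increasing by construction via the level-cake formula, (ii) $|\varphi|^*$ and $|\varphi|$ are equimeasurable, hence have the same $L^p$ norm, and (iii) the ball of radius $|t|$ has measure $|B(0,1)|\,|t|^d$.

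First I would fix $t\in\R^d$ with $t\neq 0$ (the case $t=0$ is trivial since both sides are zero or infinite in the natural sense). Because $|\varphi|^*$ is radial and radially decreasing, every $s\in B(0,|t|)$ satisfies $|\varphi|^*(s)\geq |\varphi|^*(t)$. Then I would bound the $L^p$ norm from below by integrating only on this ball:
\begin{equation*}
\norm{\varphi}_p^p = \norm{|\varphi|^*}_p^p \geq \int_{B(0,|t|)} |\varphi|^*(s)^p\,\mathrm{d}s \geq |\varphi|^*(t)^p\cdot |B(0,|t|)| = |\varphi|^*(t)^p\,|B(0,1)|\,|t|^d.
\end{equation*}
Taking $p$-th roots yields the claimed inequality. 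The case $p=\infty$ reduces directly to $\norm{\varphi}_\infty = \norm{|\varphi|^*}_\infty \geq |\varphi|^*(t)$, which is the correct reading of the formula when one interprets $|t|^{d/p}$ and $|B(0,1)|^{1/p}$ at $p=\infty$ as equal to $1$.

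There is essentially no obstacle here: the lemma is a one-line consequence of equimeasurability together with the fact that a non-increasing radial function on a ball majorizes its value at the boundary of the ball. The only small point to be careful about is to invoke that $|\varphi|^*$ really is radial and non-increasing, which is immediate from the level-cake representation since each characteristic function $\chi_{\{|\varphi|>\lambda\}^*}=\chi_{B(0,r_\lambda)}$ is radial and non-increasing, and such functions form a cone stable under positive linear combinations and monotone limits.
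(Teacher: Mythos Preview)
Your proof is correct and is essentially the same Chebyshev-type argument as the paper's: the paper applies Bienaym\'e--Tchebichev to $\ffi$ and then transfers to $|\ffi|^*$ via equimeasurability of level sets, while you apply the identical estimate directly to $|\ffi|^*$ using that it is radial non-increasing. The two differ only in the order of operations, and your formulation avoids the explicit inversion step.
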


\begin{proof}[Proof]
From Bienaym\'e Tchebichev we get $\mu_\ffi(\lambda)\leq\dst\frac{\norm{\ffi}_p^p}{\lambda^p}$
for all $\lambda>0$. But then,
$$
|B(0,1)|r^d:=|\{x\in\R\,:\ |\ffi|^*(x)>\lambda\}|=|\{y\in\R^d\,:\ |\ffi(y)|>\lambda\}|\leq
\frac{\norm{\ffi}_p^p}{\lambda^p}.
$$
Therefore, for $\zeta\in\R^d$ with $|\zeta|=1$, $|\ffi|^*\left(\left(\frac{\norm{\ffi}_p^p}{\lambda^p|B(0,1)|}\right)^{1/d}\zeta\right)\leq\lambda$. Inverting
this, we get $\dst|\ffi|^*(t)\leq\frac{\norm{\ffi}_p}{|B(0,1)|^{1/p}|t|^{d/p}}$ as claimed.
\end{proof}

In a similar way, one may prove that, for a function $\ffi$ on $\R^d$
that decays like $|\ffi(x)|\leq C(1+|x|)^{-\gamma}$, then
\begin{equation}
\label{eq:newrear}
|\ffi|^*(t)\leq C(1+|t|)^{-\gamma}.
\end{equation}

\section{Lieb's Theorem and its ``dual'' version}\label{sec:3}

Assume that either $\chi\in L^2(\R^d)$ and $\ffi\in L^2(\R^d)\cap L^1(\R^d)$
or vice versa $\ffi\in L^2(\R^d)$ and $\chi\in L^2(\R^d)\cap L^1(\R^d)$.
Then, 
\begin{eqnarray*}
\int_{\R^d}\widehat{\chi}(\xi)|\widehat{\ffi}(\xi)|^2\,\mbox{d}\xi&=&
\int_{\R^d}\widehat{\chi}(\xi)\widehat{\ffi}(\xi)\overline{\widehat{\ffi}(\xi)}\,\mbox{d}\xi\\
&=&\scal{\widehat{\chi}\widehat{\ffi},\widehat{\ffi}}
=\scal{\widehat{\chi*\ffi},\widehat{\ffi}}=\scal{\chi*\ffi,\ffi}
\end{eqnarray*}
with Parseval. The computation are justified by the fact that, as $\chi\in L^2(\R^d)$
and $\ffi\in L^1(\R^d)$ (or vice versa), $\chi*\ffi\in L^2(\R^d)$. As $\ffi\in L^2(\R^d)$, we may apply Fubini
to get
\begin{eqnarray*}
\int_{\R^d}\widehat{\chi}(\xi)|\widehat{\ffi}(\xi)|^2\,\mbox{d}\xi
&=&\int_{\R^d}\int_{\R^d}\chi(x-y)\ffi(x)\overline{\ffi(y)}\,\mbox{d}x\,\mbox{d}y\\
&\leq&\int_{\R^d}\int_{\R^d}|\chi|^*(x-y)|\ffi|^*(x)|\ffi|^*(y)\,\mbox{d}x\,\mbox{d}y
\end{eqnarray*}
with Riesz's Rearrangement Inequality (Lemma \ref{lem:riesz}).
Unwinding the previous computations, we thus obtain
\begin{equation}
\label{eq:weight}
\int_{\R^d}\widehat{\chi}(\xi)\bigl|\widehat{\ffi}(\xi)\bigr|^2\,\mbox{d}\xi
\leq\int_{\R^d}\widehat{|\chi|^*}(\xi)\bigl|\widehat{|\ffi|^*}(\xi)\bigr|^2\,\mbox{d}\xi.
\end{equation}

\begin{rema}
The hypothesis on $\ffi$ can not be totally lifted. 
For instance, one can not have the inequality with $\widehat{\chi}=\chi_S$
for large enough $S$ ({\it see} \cite{DS}). However, Theorem \ref{th:th1intro} gives a good substitute in that case.
\end{rema}

As an application of \eqref{eq:weight}, let us prove the following:

\begin{prop}
Let $s>0$, and let $\ffi\in L^2$. Then
$$
\int_{\R^d}(1+|\xi|^2)^{-s}|\widehat{\ffi}(\xi)|^2\,\mbox{d}\xi
\leq\int_{\R^d}(1+|\xi|^2)^{-s}|\widehat{\ffi^*}(\xi)|^2\,\mbox{d}\xi.
$$
\end{prop}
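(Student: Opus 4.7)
My plan is to apply inequality \eqref{eq:weight} with a function $\chi$ whose Fourier transform is the weight $(1+|\xi|^2)^{-s}$, namely the Bessel potential kernel $G_{2s}$ of order $2s$. This kernel is nonnegative, radially symmetric, and strictly decreasing in $|x|$, so it coincides with its own symmetric decreasing rearrangement. If \eqref{eq:weight} could be applied directly to this $\chi$, the desired inequality would follow at once, since $\widehat{|G_{2s}|^*}(\xi)=\widehat{G_{2s}}(\xi)=(1+|\xi|^2)^{-s}$.

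The obstruction is that \eqref{eq:weight} was derived under the hypothesis $\chi\in L^1\cap L^2$, whereas $G_{2s}$ lies in $L^2(\R^d)$ only when $s>d/4$. To handle all $s>0$ uniformly, I would invoke the subordination identity
\[
(1+|\xi|^2)^{-s}=\frac{1}{\Gamma(s)}\int_0^{\infty}t^{s-1}e^{-t}\,e^{-t|\xi|^2}\,\mathrm{d}t,
\]
which realises the Bessel weight as an average of Gaussian weights. For each fixed $t>0$, the function $g_t$ defined by $\widehat{g_t}(\xi)=e^{-t|\xi|^2}$ is itself a Gaussian --- positive, radial, strictly decreasing, and in $L^1(\R^d)\cap L^2(\R^d)$ --- so $|g_t|^*=g_t$ and \eqref{eq:weight} applies directly with $\chi=g_t$.

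To reach an arbitrary $\ffi\in L^2(\R^d)$, I would first approximate $\ffi$ by the truncations $\ffi_n:=\ffi\chi_{B(0,n)}\in L^1\cap L^2$ and apply \eqref{eq:weight} to $g_t$ and $\ffi_n$. Passing to the limit $n\to\infty$ relies on Plancherel to get $\widehat{\ffi_n}\to\widehat{\ffi}$ in $L^2$ and on the classical $L^2$-contractivity of the symmetric decreasing rearrangement, $\norm{|f|^*-|h|^*}_2\leq\norm{f-h}_2$, to get $\ffi_n^*\to\ffi^*$ and hence $\widehat{\ffi_n^*}\to\widehat{\ffi^*}$ in $L^2$; the boundedness of $e^{-t|\xi|^2}$ then sends both weighted integrals to their limits and yields the Gaussian-weighted inequality for arbitrary $\ffi\in L^2$. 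The final step is to multiply through by $\frac{1}{\Gamma(s)}t^{s-1}e^{-t}$, integrate over $t\in(0,\infty)$, and swap the two integrations by Tonelli's theorem (every integrand is nonnegative), which reconstructs the Bessel weight and produces the claimed inequality. The only non-routine ingredient is the $L^2$-contractivity of rearrangement, which is classical (see e.g.\ \cite{LL}); as a cleaner alternative, I could instead observe that the derivation of \eqref{eq:weight} itself extends without change to $\chi\in L^1$ and $\ffi\in L^2$, since Young's inequality gives $\chi*\ffi\in L^2$ and the Hardy--Littlewood--Riesz double integral is bounded by $\norm{\chi}_1\norm{\ffi}_2^2$, which would allow \eqref{eq:weight} to be applied directly with $\chi=G_{2s}$ and bypass the subordination and density steps altogether.
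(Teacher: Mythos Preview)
Your proposal is correct and follows essentially the same route as the paper: both represent the weight $(1+|\xi|^2)^{-s}$ as a superposition of Gaussians (the paper via the integral $c_s\int_{\R^d}|u|^{s-d}e^{-\pi(1+|\xi|^2)|u|^2}\,\mathrm{d}u$, you via the one-dimensional subordination formula $\frac{1}{\Gamma(s)}\int_0^\infty t^{s-1}e^{-t}e^{-t|\xi|^2}\,\mathrm{d}t$) and then apply \eqref{eq:weight} for each fixed Gaussian before integrating. Your density argument for general $\ffi\in L^2$ is in fact superfluous, since the paper's hypotheses for \eqref{eq:weight} already allow $\chi\in L^1\cap L^2$ with $\ffi\in L^2$, and each Gaussian $g_t$ lies in $L^1\cap L^2$.
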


\begin{proof}[Proof]
Let us recall that, for each $s>0$, there exists a number $c_s$ such that, for every $\xi\in\R^d$,
$$
(1+|\xi|^2)^{-s}=c_s\int_{\R^d}|u|^{s-d}e^{-\pi(1+|\xi|^2)|u|^2}\,\mbox{d}u.
$$
Let us define $\chi_u(x)=c_s|u|^{s-2d}e^{-\pi|u|^2}e^{-\pi|\xi|^2/|u|^2}$. A simple computation then
shows that 
$\widehat{\chi_u}(\xi)=c_s|u|^{s-d}e^{-\pi(1+|\xi|^2)|u|^2}$. Applying \eqref{eq:weight}
to $\chi=\chi_u$, we obtain
$$
\int_{\R^d}c_s|u|^{s-d}e^{-\pi(1+|\xi|^2)|u|^2}|\widehat{\ffi}(\xi)|^2\,\mbox{d}\xi
\leq\int_{\R^d}c_s|u|^{s-d}e^{-\pi(1+|\xi|^2)|u|^2}|\widehat{\ffi^*}(\xi)|^2\,\mbox{d}\xi.
$$
Integrating over $u\in\R^d$ gives the result.
\end{proof}

As a second consequence of \eqref{eq:weight}, we prove Lieb Theorem and Donoho-Stark's extension of it.
The proof is directly inspired by Lieb's proof.

\begin{theorem}
Let $d\geq 1$ be an integer and $0\leq s\leq 1$. Then there exists a constant $C_s$
such that For $\ffi$ in the Sobolev space $H^s(\R^d)$, then
$\norm{|\ffi|^*}_{H^s}\leq C_s\norm{\ffi}_{H^s}$.
\end{theorem}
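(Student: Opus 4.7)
The plan is to use the derivation of \eqref{eq:weight} with a family of convolution kernels whose Fourier transforms can be superposed to form a Sobolev-type weight. Since $\|\ffi\|_2=\||\ffi|^*\|_2$ by equimeasurability and since $(1+|\xi|^2)^s$ and $1+|\xi|^{2s}$ are two-sidedly comparable for $0\le s\le 1$, it suffices to establish, for $0<s<1$,
\begin{equation}
\label{eq:goal-lieb}
\int_{\R^d}|\xi|^{2s}\bigl|\widehat{|\ffi|^*}(\xi)\bigr|^2\,\mathrm{d}\xi\le \int_{\R^d}|\xi|^{2s}\bigl|\widehat{\ffi}(\xi)\bigr|^2\,\mathrm{d}\xi.
\end{equation}
The endpoint $s=0$ is merely equimeasurability, while $s=1$ follows either by letting $s\nearrow 1$ (both sides are monotone in $s$) or by invoking the Polya--Szego inequality directly; the constant $C_s$ will then arise solely from the norm equivalence $(1+|\xi|^2)^s\asymp 1+|\xi|^{2s}$.

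The analytical ingredient is the subordination identity
$$
|\xi|^{2s}=c_s\int_0^{+\infty}\bigl(1-e^{-4\pi^2 t|\xi|^2}\bigr)t^{-1-s}\,\mathrm{d}t,
$$
valid for $0<s<1$ with an explicit positive constant $c_s$, checked by the substitution $u=4\pi^2 t|\xi|^2$ and one integration by parts. Let $p_t(x)=(4\pi t)^{-d/2}e^{-|x|^2/4t}$ denote the heat kernel, so that $\widehat{p_t}(\xi)=e^{-4\pi^2 t|\xi|^2}$ and $p_t$ is its own symmetric decreasing rearrangement.

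Mimicking the computation leading to \eqref{eq:weight}, Plancherel and Fubini give
$$
\int_{\R^d}\bigl(1-e^{-4\pi^2 t|\xi|^2}\bigr)|\widehat\ffi(\xi)|^2\,\mathrm{d}\xi=\norm{\ffi}_2^2-\mathrm{Re}\int_{\R^d}\int_{\R^d}p_t(x-y)\ffi(x)\overline{\ffi(y)}\,\mathrm{d}x\,\mathrm{d}y.
$$
Bounding $|\ffi(x)\overline{\ffi(y)}|\le|\ffi(x)||\ffi(y)|$ and applying Riesz's Rearrangement Inequality (Lemma \ref{lem:riesz}) to the triple $(|\ffi|,|\ffi|,p_t)$, together with $\norm\ffi_2=\norm{|\ffi|^*}_2$, yields the pointwise-in-$t$ inequality
$$
\int_{\R^d}\bigl(1-e^{-4\pi^2 t|\xi|^2}\bigr)\bigl|\widehat{|\ffi|^*}(\xi)\bigr|^2\,\mathrm{d}\xi\le \int_{\R^d}\bigl(1-e^{-4\pi^2 t|\xi|^2}\bigr)|\widehat\ffi(\xi)|^2\,\mathrm{d}\xi.
$$
Multiplying by $c_st^{-1-s}$ and integrating in $t\in(0,+\infty)$, a Fubini--Tonelli argument (legitimate since all integrands are non-negative after passing to $|\ffi|$) converts the time-dependent weight into $|\xi|^{2s}$ and produces \eqref{eq:goal-lieb}.

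The only mildly delicate points are the real-part reduction in Riesz's inequality for a complex-valued $\ffi$ and the justification of the outer Fubini swap; both are routine because the positivity of $p_t$ and of $c_st^{-1-s}$ makes every quantity non-negative once $\ffi$ has been replaced by $|\ffi|$. As a byproduct, the argument gives $C_s=1$ when the $H^s$-norm is replaced by the equivalent norm $\bigl(\norm\ffi_2^2+\int|\xi|^{2s}|\widehat\ffi|^2\,\mathrm{d}\xi\bigr)^{1/2}$; the constant stated in Theorem \ref{th:lieb} is then simply the ratio of the two norm-squares.
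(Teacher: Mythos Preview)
Your argument is correct and follows essentially the same strategy as the paper: obtain the Gaussian-weighted inequality from Riesz's rearrangement inequality (this is \eqref{eq:weight} with a Gaussian $\chi$), subtract the $L^2$ identity, and then pass from the heat weight to the Sobolev weight $|\xi|^{2s}$. The only difference is in this last step: the paper invokes Bernstein's theorem on completely monotone functions to write $e^{-\pi|\xi|^{2s}}$ as a superposition of Gaussians and then takes the limit $(1-e^{-\pi t|\xi|^{2s}})/t\to\pi|\xi|^{2s}$ as $t\to0$, whereas you use the explicit subordination identity $|\xi|^{2s}=c_s\int_0^\infty(1-e^{-4\pi^2 t|\xi|^2})\,t^{-1-s}\,\mathrm{d}t$ and integrate in $t$ directly. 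Your route is slightly more self-contained, since the identity is checked by an elementary change of variables rather than by appeal to Bernstein's theorem.

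One small correction: the parenthetical ``both sides are monotone in $s$'' is not literally true, since $|\xi|^{2s}$ is decreasing in $s$ for $|\xi|<1$. The passage to $s=1$ does work, but via dominated convergence on the right-hand side (dominated by $(1+|\xi|^2)|\widehat\ffi|^2$) and Fatou's lemma on the left; alternatively, as you note, one may simply quote the P\'olya--Szeg\H{o} inequality at $s=1$.
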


\begin{proof}[Proof]
The case $s=0$ is trivial:
\begin{equation}
\label{eq:trivff*}
\|\widehat{|\ffi|^*}\|_2=\||\ffi|^*\|_2=\norm{\ffi}_2=\|\widehat{\ffi}\|_2.
\end{equation}

If $0<s\le 1$, define $g_s(x)=e^{-\pi|x|^s}$ for $x\geq 0$. Then, $g_s$ is completely monotonic, that is, for every integer $k$,
$(-1)^k\partial^kg_s(x)\geq 0$ for $x>0$. According to a celebrated theorem of Bernstein
({\it see e.g.} \cite[Chapter 18, Section 4]{Fe} or \cite[page 161]{Wid}), $g_s$ is the Laplace transform
of a positive measure on $(0,+\infty)$. In particular, there exists a 
non-negative measure $\mu_s$ such that
$$
e^{-\pi|\xi|^{2s}}=\int_0^{+\infty}e^{-\pi t|\xi|^2}\,\mbox{d}\mu_s(t).
$$
From \eqref{eq:weight} applied to $\chi$ defined by $\chi(t)=t^{-d}e^{-\pi|x|^2/t}$, we obtain
$$
\int_{\R^d}e^{-\pi t|\xi|^2}\bigl|\widehat{\ffi}(\xi)\bigr|^2\,\mbox{d}\xi
\leq\int_{\R^d}e^{-\pi t|\xi|^2}\bigl|\widehat{|\ffi|^*}(\xi)\bigr|^2\,\mbox{d}\xi.
$$
Integrating with respect to $\mu_s(t)$, we thus obtain
$$
\int_{\R^d}e^{-\pi t|\xi|^{2s}}\bigl|\widehat{\ffi}(\xi)\bigr|^2\,\mbox{d}\xi
\leq\int_{\R^d}e^{-\pi t|\xi|^{2s}}\bigl|\widehat{|\ffi|^*}(\xi)\bigr|^2\,\mbox{d}\xi.
$$
With \eqref{eq:trivff*}, it follows that
$$
\int_{\R^d}\frac{1-e^{-\pi t|\xi|^{2s}}}{t}\bigl|\widehat{|\ffi|^*}(\xi)\bigr|^2\,\mbox{d}\xi
\leq\int_{\R^d}\frac{1-e^{-\pi t|\xi|^{2s}}}{t}\bigl|\widehat{\ffi}(\xi)\bigr|^2\,\mbox{d}\xi.
$$
Finally, $\dst\frac{1-e^{-\pi t|\xi|^{2s}}}{t}\to\pi|\xi|^{2s}$ as $t\to 0$ and
$\dst\frac{1-e^{-\pi t|\xi|^{2s}}}{t}\leq\pi|\xi|^{2s}$ so that Lebesgue's Lemma implies that
\begin{equation}
\label{eq:hs}
\int_{\R^d}|\xi|^{2s}\bigl|\widehat{|\ffi|^*}(\xi)\bigr|^2\,\mbox{d}\xi
\leq\int_{\R^d}|\xi|^{2s}\bigl|\widehat{\ffi}(\xi)\bigr|^2\,\mbox{d}\xi.
\end{equation}
A new appeal to \eqref{eq:trivff*} and to the fact that $1+|\xi|^{2s}\simeq (1+|\xi|^2)^s$ gives the result.
\end{proof}

Note that for $s=1$, $1+|\xi|^{2s}=(1+|\xi|^2)^s$ so that $C_1=1$.
Note also that the proof does not extend to $s>1$ as $g_s$ is no longer completely monotonic in that case.

\section{An extension of a result of Donoho and Stark}

\subsection{Higher dimensional generalization of Donoho and Stark's theorem}
We will start this section by giving a simple generalization of Donoho and Stark's result.
The main purpose of this is to explain the idea of the proof of the next theorem
which may appear a bit obscure and technical otherwise.

\begin{prop}\label{pro:ds}
Let $d\geq 1$ and $\alpha>-1/2$. Then there exists a constant $\vartheta=\vartheta(d,\alpha)$
such that, for every $\Omega,S>0$ with $\Omega S^{1/d}\leq \vartheta$
and for every $\ffi\in L^1(\R^d)$ with support of finite measure at most
$S$, we have
$$
\abs{\int_{\R^d}\Omega^{-d}m_\alpha(\xi/\Omega)\widehat{\ffi}(\xi)\,\mathrm{d}\xi}
\leq \int_{\R^d}\Omega^{-d}m_\alpha(\xi/\Omega) \widehat{|\ffi|^*}(\xi)\,\mathrm{d}\xi.
$$
\end{prop}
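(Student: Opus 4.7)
The plan is to transfer $m_\alpha(\xi/\Omega)$ to the physical side via Fubini, apply the triangle inequality together with Hardy--Littlewood, and then exploit that, under the smallness assumption $\Omega S^{1/d}\leq\vartheta$, the kernel $\widehat{m_\alpha}(\Omega t)$ coincides with its own decreasing rearrangement on $\supp|\ffi|^*$.

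Since $m_\alpha\in L^1(\R^d)$ (compactly supported) and $\ffi\in L^1(\R^d)$, Fubini gives
$$
\int_{\R^d}\Omega^{-d}m_\alpha(\xi/\Omega)\widehat{\ffi}(\xi)\,\mathrm{d}\xi=\int_{\R^d}\ffi(t)\widehat{m_\alpha}(\Omega t)\,\mathrm{d}t,
$$
and the same identity with $\ffi$ replaced by $|\ffi|^*$ (still in $L^1$ by equimeasurability) rewrites the right-hand side of the proposition as $\int|\ffi|^*(t)\widehat{m_\alpha}(\Omega t)\,\mathrm{d}t$. The triangle inequality followed by Hardy--Littlewood (Lemma \ref{lem:rear2}) applied to $|\ffi|$ and $|\widehat{m_\alpha}(\Omega\cdot)|$, together with the scaling property $|f(\Omega\cdot)|^*=|f|^*(\Omega\cdot)$ recalled in Section 2, yields
$$
\abs{\int_{\R^d}\ffi(t)\widehat{m_\alpha}(\Omega t)\,\mathrm{d}t}\leq\int_{\R^d}|\ffi|^*(t)\,|\widehat{m_\alpha}|^*(\Omega t)\,\mathrm{d}t.
$$

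The heart of the proof is then the pointwise claim: there exists $\vartheta_0=\vartheta_0(d,\alpha)>0$ such that $|\widehat{m_\alpha}|^*(y)=\widehat{m_\alpha}(y)$ for all $y\in B(0,\vartheta_0)$. To establish this, I would use the explicit formula $\widehat{m_\alpha}(y)=C_\alpha\mathcal{J}_{d/2+\alpha}(2\pi|y|)$ with $C_\alpha>0$: differentiating the Poisson representation shows that $\mathcal{J}_{d/2+\alpha}$ is strictly decreasing on an interval $[0,\rho]$, while $|\widehat{m_\alpha}(y)|\leq\widehat{m_\alpha}(0)$ with equality only at $y=0$ and $|\widehat{m_\alpha}|\to 0$ at infinity. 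One then selects $\vartheta_0\in(0,\rho)$ small enough that $\sup_{|y|\geq\vartheta_0}|\widehat{m_\alpha}(y)|<\widehat{m_\alpha}(\vartheta_0)$; for such $\vartheta_0$ and $|y|\leq\vartheta_0$, the super-level set $\{|\widehat{m_\alpha}|>\widehat{m_\alpha}(y)\}$ reduces to exactly $B(0,|y|)$, so the equality of distribution functions forces $|\widehat{m_\alpha}|^*(y)=\widehat{m_\alpha}(y)$.

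To conclude, the support hypothesis on $\ffi$ forces $\supp|\ffi|^*\subset B(0,r)$ with $|B(0,1)|r^d\leq S$. Setting $\vartheta:=\vartheta_0|B(0,1)|^{1/d}$, the assumption $\Omega S^{1/d}\leq\vartheta$ yields $\Omega r\leq\vartheta_0$, so that $|\widehat{m_\alpha}|^*(\Omega t)=\widehat{m_\alpha}(\Omega t)$ holds on $\supp|\ffi|^*$, and the chain of inequalities becomes exactly the right-hand side of the proposition. The main obstacle I expect is precisely the quantitative Bessel analysis in step~3: certifying that the central lobe of $|\widehat{m_\alpha}|$ strictly dominates all side lobes over a whole ball of positive radius, which is where all numerical content of $\vartheta(d,\alpha)$ is concentrated and which one must handle uniformly in $\alpha>-1/2$ and $d\geq 1$.
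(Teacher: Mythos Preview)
Your argument follows exactly the paper's three-step scheme: pass to the space side by Parseval/Fubini, apply Hardy--Littlewood, and then show that $|\widehat{m_\alpha}|^*=\widehat{m_\alpha}$ on a small ball so that the rearranged kernel can be replaced by the kernel itself on $\supp|\ffi|^*$. One small slip: the condition ``$\sup_{|y|\geq\vartheta_0}|\widehat{m_\alpha}(y)|<\widehat{m_\alpha}(\vartheta_0)$'' is impossible as written, since the supremum is attained at $|y|=\vartheta_0$ itself. What you need (and what your level-set computation actually uses) is $\sup_{|y|\geq\vartheta_0}|\widehat{m_\alpha}(y)|\leq\widehat{m_\alpha}(\vartheta_0)$; equivalently, pick $\vartheta_0<\rho$ so that $\sup_{|y|\geq\rho}|\widehat{m_\alpha}(y)|\leq\widehat{m_\alpha}(\vartheta_0)$, which follows from your ingredients (continuity, decay, and the strict inequality $|\widehat{m_\alpha}(y)|<\widehat{m_\alpha}(0)$ for $y\neq 0$).

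The genuine difference from the paper is in how that strict inequality is obtained. You get it for free from $m_\alpha\geq 0$: the triangle inequality in $\bigl|\int m_\alpha e^{-2i\pi\scal{x,\xi}}\bigr|\leq\int m_\alpha$ is strict unless $e^{-2i\pi\scal{x,\xi}}$ is constant on $\supp m_\alpha=\overline{B(0,1)}$, i.e.\ unless $\xi=0$. The paper instead observes that $\mathcal{J}_{d/2+\alpha}$ is positive definite (Bochner) and then runs a $3\times 3$ determinant argument to rule out $|\mathcal{J}_{d/2+\alpha}(t_i)|=\mathcal{J}_{d/2+\alpha}(0)$ at any local extremum $t_i>0$. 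Your route is shorter and entirely elementary; the paper's positive-definiteness trick is more elaborate here but is the kind of argument that survives when the kernel is not visibly the Fourier transform of a nonnegative function.
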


\begin{proof}[Proof] Let $R$ be such that the ball of radius $R$ has measure
$S$, $|B(0,R)|=S$. Plancherel's Formula shows that
$$
\Omega^{-d}\int_{\R^d}m_\alpha(\xi/\Omega)\widehat{\ffi}(\xi)\,\mbox{d}\xi
=2^{d/2+\alpha}\pi^{d/2}\Gamma(\alpha+1)
\int_{\R^d}\mathcal{J}_{d/2+\alpha}(2\pi\Omega|x|)\ffi(x)\,\mbox{d}x.
$$
It follows from Lemma \ref{lem:rear2} that
\begin{eqnarray*}
\abs{\int_{\R^d}\Omega^{-d}m_\alpha(\xi/\Omega)\widehat{\ffi}(\xi)\,\mbox{d}\xi}&\leq&\\
&&\hspace{-3cm}\leq\quad
2^{d/2+\alpha}\pi^{d/2}\Gamma(\alpha+1)
\int_{\R^d}|\mathcal{J}_{d/2+\alpha}(2\pi\Omega|x|)||\ffi(x)|\,\mbox{d}x\\
&&\hspace{-3cm}\leq\quad
2^{d/2+\alpha}\pi^{d/2}\Gamma(\alpha+1)
\int_{\R^d}|\mathcal{J}_{d/2+\alpha}|^*(2\pi\Omega|x|)|\ffi|^*(x)\,\mbox{d}x\\
&&\hspace{-3cm}=\quad
2^{d/2+\alpha}\pi^{d/2}\Gamma(\alpha+1)
\int_{B(0,R)}|\mathcal{J}_{d/2+\alpha}|^*(2\pi\Omega|x|)|\ffi|^*(x)\,\mbox{d}x.
\end{eqnarray*}

In order to complete the proof, it is enough to prove that $\mathcal{J}_{d/2+\alpha}$ has an ``absolute''
maximum at $0$\,:

\medskip

\noindent{\bf Fact.} {\sl There exists $\eps_0>0$ such that for all $t\in[0,\eps_0)$, 
$\mathcal{J}_{d/2+\alpha}(t)>\mathcal{J}_{d/2+\alpha}(\eps_0)$ and, for all $t\geq\eps_0$,
$|\mathcal{J}_{d/2+\alpha}(t)|\leq \mathcal{J}_{d/2+\alpha}(\eps_0)$.}

\medskip

We will postpone the proof of this fact to the end of this section. An immediate consequence of this fact
is that $|\mathcal{J}_{d/2+\alpha}|^*=\mathcal{J}_{d/2+\alpha}$ on $[0,\eps_0)$, thus,
if $2\pi\Omega|x|<\eps_0$ on $B(0,R)$ {\it i.e.} $\Omega S^{1/d}<\eps_0|B(0,1)|^{1/d}/2\pi$, then
\begin{eqnarray}
\abs{\int_{\R^d}\Omega^{-d}m_\alpha(\xi/\Omega)\widehat{\ffi}(\xi)\,\mbox{d}\xi}\nonumber\\
&&\hspace{-3cm}\leq\quad
2^{d/2+\alpha}\pi^{d/2}\Gamma(\alpha+1)
\int_{B(0,R)}\mathcal{J}_{d/2+\alpha}(2\pi\Omega|x|)|\ffi|^*(x)\,\mbox{d}x\label{eq:key}\\
&&\hspace{-3cm}=\quad
2^{d/2+\alpha}\pi^{d/2}\Gamma(\alpha+1)
\int_{\R^d}\mathcal{J}_{d/2+\alpha}(2\pi\Omega|x|)|\ffi|^*(x)\,\mbox{d}x\nonumber\\
&&\hspace{-3cm}=\quad\int_{\R^d}\Omega^{-d}m_\alpha(\xi/\Omega)\widehat{|\ffi|^*}(\xi)\,\mbox{d}\xi\nonumber
\end{eqnarray}
which completes the proof once we have proved the fact.
\end{proof}

Let us now give the proof of the fact (note that this is obvious for the sinc
function).

\begin{proof}[Proof of the fact]
First, $\mathcal{J}_{d/2+\alpha}$ is the Fourier transform
of a positive $L^1$ function and is therefore positive definite\,:
$$
\sum_{i,j=1,\ldots,n}\mathcal{J}_{d/2+\alpha}(|t_i-t_j|)\xi_i\overline{\xi_j}\geq 0
$$
for every integer $n$, every $t_1,\ldots,t_n\in\R^d$ and every $\xi_1,\ldots,\xi_n\in\C$.
As usual, by appropriately choosing the $\xi_i$'s and the $t_i$'s one gets that
$\mathcal{J}_{d/2+\alpha}(t)$ is maximal at $t=0$. As 
$|\mathcal{J}_{d/2+\alpha}(t)|\leq C/t^{d/2+1/2+\alpha}$, it is enough to show that, 
for every local maximum $t_i>0$ of $|\mathcal{J}_{d/2+\alpha}|$,
$|\mathcal{J}_{d/2+\alpha}(t_i)|<\mathcal{J}_{d/2+\alpha}(0)$.

To do so, note that from the positive definiteness of $\mathcal{J}_{d/2+\alpha}$, for every $t>0$, the matrix
$$
\begin{pmatrix}
\mathcal{J}_{d/2+\alpha}(0)&\mathcal{J}_{d/2+\alpha}(t_i)&\mathcal{J}_{d/2+\alpha}(t+t_i)\\
\mathcal{J}_{d/2+\alpha}(t_i)&\mathcal{J}_{d/2+\alpha}(0)&\mathcal{J}_{d/2+\alpha}(t)\\
\mathcal{J}_{d/2+\alpha}(t+t_i)&\mathcal{J}_{d/2+\alpha}(t)&\mathcal{J}_{d/2+\alpha}(0)
\end{pmatrix}
$$
is positive definite and thus has non-negative determinant. In particular, if $\mathcal{J}_{d/2+\alpha}(t_i)=\pm\mathcal{J}_{d/2+\alpha}(0)$,
$\bigl(\mathcal{J}_{d/2+\alpha}(t+t_i)\mp\mathcal{J}_{d/2+\alpha}(t)\bigr)^2\leq 0$. It follows that,
for all $t>0$, $\mathcal{J}_{d/2+\alpha}(t+t_i)=\pm\mathcal{J}_{d/2+\alpha}(t)$, which 
contradicts the estimate $|\mathcal{J}_{d/2+\alpha}(t)|\leq C/t^{d/2+1/2+\alpha}$.
\end{proof}

Finally, note that if, as in \cite{DS} we use Riesz's Inequality (Lemma \ref{lem:riesz})
instead of Hardy-Littlewood's Iniequality (Lemma \ref{lem:rear2}), we obtain the following:

\begin{prop}
Let $d\geq 1$ and $\alpha>-1/2$ and $\vartheta(d,\alpha)$
be the constant of Proposition \ref{pro:ds}. Let $\ffi\in L^2(\R^d)$ with support of finite measure $S$. 
If $\Omega S^{1/d}\leq \vartheta(d,\alpha)/2$, then
$$
\abs{\int_{\R^d}\Omega^{-d}m_\alpha(\xi/\Omega)|\widehat{\ffi}(\xi)|^2\,\mathrm{d}\xi}
\leq \int_{\R^d}\Omega^{-d}m_\alpha(\xi/\Omega)|\widehat{|\ffi|^*}(\xi)|^2\,\mathrm{d}\xi.
$$
\end{prop}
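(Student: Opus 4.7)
The plan is to mimic the argument of Proposition~\ref{pro:ds}, but to first rewrite $\bigl|\widehat\ffi\bigr|^2$ as a convolution kernel paired with $\ffi\otimes\overline\ffi$, so that Riesz's inequality (Lemma~\ref{lem:riesz}) becomes the natural tool in place of Hardy--Littlewood. Note that $\ffi\in L^2$ together with $|\supp\ffi|\leq S<+\infty$ forces $\ffi\in L^1$ by Cauchy--Schwarz, which makes the Plancherel/Fubini manipulations below legitimate.

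The first step is to apply the multiplication formula. Using
$\widehat{m_\alpha}(\xi)=2^{d/2+\alpha}\pi^{d/2}\Gamma(\alpha+1)\mathcal{J}_{d/2+\alpha}(2\pi|\xi|)$ and the dilation rule, set
$$
K_\Omega(x):=2^{d/2+\alpha}\pi^{d/2}\Gamma(\alpha+1)\mathcal{J}_{d/2+\alpha}(2\pi\Omega|x|),
$$
which is the Fourier transform of $\Omega^{-d}m_\alpha(\cdot/\Omega)$. I would then express (via $|\widehat\ffi|^2=\widehat{\ffi*\widetilde\ffi}$, where $\widetilde\ffi(x)=\overline{\ffi(-x)}$)
$$
\int_{\R^d}\Omega^{-d}m_\alpha(\xi/\Omega)\bigl|\widehat{\ffi}(\xi)\bigr|^2\,\mathrm{d}\xi
=\int_{\R^d}\int_{\R^d}K_\Omega(y-z)\,\ffi(y)\overline{\ffi(z)}\,\mathrm{d}y\,\mathrm{d}z,
$$
take absolute values, and invoke Lemma~\ref{lem:riesz} to bound the left-hand side by
$\dst\int\!\!\int |K_\Omega|^*(y-z)|\ffi|^*(y)|\ffi|^*(z)\,\mathrm{d}y\,\mathrm{d}z$.

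The crux is then to remove the star on $K_\Omega$ inside the integral. Since $|\supp\ffi|\leq S$, the rearrangement $|\ffi|^*$ is supported in $B(0,R)$ with $R\leq(S/|B(0,1)|)^{1/d}$, so $y-z$ runs only over $B(0,2R)$. The ``Fact'' established during the proof of Proposition~\ref{pro:ds} says $|\mathcal{J}_{d/2+\alpha}|^*(t)=\mathcal{J}_{d/2+\alpha}(t)$ for $t\in[0,\eps_0)$, that is, $|K_\Omega|^*(x)=K_\Omega(x)$ whenever $2\pi\Omega|x|<\eps_0$. Since $\vartheta(d,\alpha)=\eps_0|B(0,1)|^{1/d}/(2\pi)$, the hypothesis $\Omega S^{1/d}\leq \vartheta(d,\alpha)/2$ translates precisely to $2\pi\Omega\cdot 2R\leq\eps_0$, which is exactly what allows the replacement on the support of the integrand.

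Once the star is gone, I would unwind the first step in reverse with $|\ffi|^*$ in place of $\ffi$: the double integral $\int\!\!\int K_\Omega(y-z)|\ffi|^*(y)|\ffi|^*(z)\,\mathrm{d}y\,\mathrm{d}z$ is exactly $\int \Omega^{-d}m_\alpha(\xi/\Omega)\bigl|\widehat{|\ffi|^*}(\xi)\bigr|^2\,\mathrm{d}\xi$, which closes the proof. The only real obstacle is the book-keeping around the constant: passing from $L^1$ (Hardy--Littlewood, support of $\ffi^*$ lies in $B(0,R)$) to $L^2$ (Riesz, argument of $|K_\Omega|^*$ lies in $B(0,2R)$) doubles the relevant radius, which is exactly what forces the factor $1/2$ in front of $\vartheta(d,\alpha)$ compared with Proposition~\ref{pro:ds}.
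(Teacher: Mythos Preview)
Your proposal is correct and follows exactly the route the paper indicates: replace Hardy--Littlewood by Riesz's rearrangement inequality in the proof of Proposition~\ref{pro:ds}, using $|\widehat\ffi|^2=\widehat{\ffi*\widetilde\ffi}$ to produce the double integral on which Lemma~\ref{lem:riesz} acts. Your bookkeeping on the radius---$|y-z|\le 2R$ instead of $|x|\le R$, hence the factor $1/2$ in front of $\vartheta(d,\alpha)$---is precisely the point the paper leaves implicit.
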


\subsection{The main theorem}
We are now in position to extend this result in the following way:

\begin{theorem}
\label{th:ds2}
Let $d$ be an integer and let $\beta\geq\frac{d}{2}-1$ and $\alpha> -\frac{1}{2}$.
Let $S,\Omega>0$. Set
$$
\psi(s)=\begin{cases}(1+s)^{\frac{1}{2}(d-2\alpha-1)}&\mbox{if }\alpha<\frac{d-1}{2} \\
\ln(1+s)&\mbox{if }\alpha=\frac{d-1}{2}  \\ 1&\mbox{if }\alpha>\frac{d-1}{2} \\\end{cases}.
$$
Then there exists a constant 
$C=C(d,\alpha,\beta)$ 
such that, for every $\ffi\in L^1(\R^d)$ with support of finite measure $S$,
for every $x,a\in\R^d$,
\begin{eqnarray}
\Omega^{-d}\abs{\int_{\R^d}\widehat{\ffi}(\xi)(1-|\xi-a|^2/\Omega^2)_+^\alpha
e^{2i\pi x\xi}\,\mathrm{d}\xi}&&\nonumber\\
&&\hspace{-4cm}\leq\quad
C\psi(\Omega^d S)
\int_{\R^d}\Omega^{-d}(1-|\xi|^2/\Omega^2)_+^\beta
\widehat{|\ffi|^*}(\xi)\,\mathrm{d}\xi.\label{eq:dsthm2}
\end{eqnarray}
\end{theorem}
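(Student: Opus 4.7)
The plan is to extend the argument of Proposition~\ref{pro:ds} to the regime where $\Omega S^{1/d}$ is no longer small. First, I would apply Plancherel exactly as in that proof (change of variable $\eta=\xi-a$ followed by the explicit expression $\widehat{m_\alpha}(\eta)=2^{d/2+\alpha}\pi^{d/2}\Gamma(\alpha+1)\mathcal{J}_{d/2+\alpha}(2\pi|\eta|)$) to rewrite the quantity inside the modulus on the left-hand side of \eqref{eq:dsthm2} as
\[
C_{d,\alpha}\int_{\R^d}\ffi(y)\,e^{-2i\pi(y-x)\cdot a}\,\mathcal{J}_{d/2+\alpha}(2\pi\Omega|y-x|)\,dy,
\]
with $C_{d,\alpha}=2^{d/2+\alpha}\pi^{d/2}\Gamma(\alpha+1)$. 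Then, taking absolute values and applying Hardy--Littlewood (Lemma~\ref{lem:rear2}) --- translation by $x$ leaves the symmetric decreasing rearrangement invariant --- bounds the left-hand side of \eqref{eq:dsthm2} by
\[
C_{d,\alpha}\int_{\R^d}|\ffi|^*(y)\,G^*(y)\,dy,\qquad G(y):=|\mathcal{J}_{d/2+\alpha}(2\pi\Omega|y|)|.
\]
Combining the Bessel decay $|\mathcal{J}_{d/2+\alpha}(t)|\leq C(1+t)^{-d/2-\alpha-1/2}$ with \eqref{eq:newrear} then yields the pointwise estimate $G^*(y)\leq C(1+\Omega|y|)^{-d/2-\alpha-1/2}$.

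Since $|\ffi|^*$ is supported in $B(0,R)$ with $|B(0,R)|=S$, one has $\Omega R=c_d(\Omega^dS)^{1/d}$. The factor $\psi(\Omega^dS)$ will emerge from polar-coordinate bookkeeping on the one-dimensional integral $\int_0^{\Omega R}(1+u)^{-d/2-\alpha-1/2}u^{d-1}\,du$, whose three growth regimes match the sign of $d-2\alpha-1$. A second application of Plancherel recasts the right-hand side of \eqref{eq:dsthm2} as
\[
C\psi(\Omega^dS)\,C_{d,\beta}\int_{\R^d}|\ffi|^*(y)\,\mathcal{J}_{d/2+\beta}(2\pi\Omega|y|)\,dy,
\]
so it suffices to prove, for every non-negative radially decreasing $u$ supported in $B(0,R)$,
\[
\int u(y)(1+\Omega|y|)^{-d/2-\alpha-1/2}\,dy\leq C\psi(\Omega^dS)\int u(y)\,\mathcal{J}_{d/2+\beta}(2\pi\Omega|y|)\,dy.
\]
The layer-cake representation of $u$ reduces this to a uniform comparison between the partial integrals $\int_{B(0,r)}(1+\Omega|y|)^{-d/2-\alpha-1/2}\,dy$ and $I(r):=\int_{B(0,r)}\mathcal{J}_{d/2+\beta}(2\pi\Omega|y|)\,dy$ for each $0<r\leq R$.

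The hard part will be precisely this comparison: unlike $G^*$, the function $\mathcal{J}_{d/2+\beta}$ changes sign, so $I(r)$ needs a lower bound. The hypothesis $\beta\geq d/2-1$ is the critical Bochner--Riesz exponent; under it, writing $I(r)=C_{d,\beta}^{-1}\int_{B(0,r)}\widehat{m_\beta}(\Omega y)\,dy$ and dualising via Parseval reduces the computation to a single Bessel integral over $[0,\Omega]$ that evaluates, by Sonine-type identities, in a way which exposes non-negativity of $I(r)$ for every $r>0$ together with the asymptotic $I(r)\to\Omega^{-d}/C_{d,\beta}$ as $r\to\infty$ (consistent with $\int_{\R^d}\mathcal{J}_{d/2+\beta}(2\pi\Omega|y|)\,dy=\Omega^{-d}/C_{d,\beta}$ computed directly from $\widehat{m_\beta}(0)=1$). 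Matching this lower bound against the growth of the positive integral on the left then closes the argument.
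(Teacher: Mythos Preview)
Your reduction is exactly the paper's: translate/modulate to kill $a,x$, dilate to kill $\Omega$, Plancherel plus Hardy--Littlewood to reach $\int |\varphi|^*\,G^*$, the decay bound on $G^*$ via \eqref{eq:newrear}, Plancherel again on the right-hand side, and layer-cake to reduce everything to comparing, for each $0<r\le R$,
\[
\int_0^r (1+\Omega\rho)^{-\frac{d+1}{2}-\alpha}\rho^{d-1}\,d\rho
\quad\text{against}\quad
I(r)=\int_0^r \mathcal{J}_{d/2+\beta}(2\pi\Omega\rho)\,\rho^{d-1}\,d\rho.
\]
The three regimes for $\psi$ come out of the left integral exactly as you say.

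The gap is the lower bound on $I(r)$. Your proposal---dualise via Parseval and invoke ``Sonine-type identities''---does not do the job. Parseval turns $I(r)$ into $c\,\Omega^{-d}\int_0^\Omega J_{d/2}(2\pi r\rho)\,\rho^{d/2-1}(1-\rho^2/\Omega^2)^\beta\,d\rho$, but Sonine's first integral needs the power $t^{\mu+1}$ paired with $J_\mu$; here you have $t^{d/2-1}$ paired with $J_{d/2}$, which is $t^{\mu-1}$, so the identity does not apply and no closed form emerges. The positivity of $\int_0^x t^{-\gamma}J_\nu(t)\,dt$ for $\gamma\ge 0$ is a genuinely nontrivial classical fact, not a consequence of an integral identity. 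The paper obtains it from Cooke's theorem (refined by Makai and Steinig): the successive ``wave areas'' $\int_{j_{\nu,k}}^{j_{\nu,k+1}}|J_\nu|$ are strictly decreasing, and hence so are the weighted wave areas $\int_{j_{\nu,k}}^{j_{\nu,k+1}}t^{-\gamma}|J_\nu(t)|\,dt$ for $\gamma\ge 0$; the alternating-series structure then forces $\int_0^x t^{-\gamma}J_\nu(t)\,dt$ to stay bounded below by a positive constant once $x$ is past the first zero. This is precisely where the hypothesis $\beta\ge d/2-1$ (i.e.\ $\gamma:=\beta+1-d/2\ge 0$) enters---it is the range where the monotonicity transfers from $J_\nu$ to $t^{-\gamma}J_\nu$---not the Bochner--Riesz threshold $(d-1)/2$ as you wrote. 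Also note that ``non-negativity of $I(r)$ together with the asymptotic'' is not literally enough: you need a uniform strictly positive lower bound for $r$ away from $0$, and for $d/2-1\le\beta\le(d-1)/2$ the function $\mathcal{J}_{d/2+\beta}(2\pi\Omega|\cdot|)$ is not in $L^1(\R^d)$, so even the limit $I(r)\to\Omega^{-d}/C_{d,\beta}$ requires the oscillatory cancellation that Cooke's monotonicity supplies.
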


\begin{rema}
The condition $\alpha>\frac{d-1}{2}$ that appears here for 
$\psi(s)$ to be constant is the same as the trivial bound for convergence of Bochner-Riesz means.
Actually both bounds only depend on bounds of appropriate Bessel functions. 
\end{rema}

\begin{proof}[Proof] It is enough to prove \eqref{eq:dsthm2} for $a=x=0$ and then apply it to 
$\ffi^{(a,x)}(t)=\ffi(t-x)e^{2i\pi at}$. Note that $|\ffi^{(a,x)}|^*=|\ffi|^*$ so that the
right hand side of \eqref{eq:dsthm} is unaffected by the change of $\ffi$ into $\ffi^{(a,b)}$.

We may further replace $\ffi$ by its dilate $\ffi(x/\Omega)$ so that, without loss of generality, 
we may assume that $\Omega=1$. More precisely, assume we are able to prove that
\begin{equation}
\label{eq:reduction}
\int_{\R^d}(1-|\xi|^2)^\alpha_+\widehat{\ffi}(\xi)\,\mbox{d}\xi\leq
\psi(S)\int_{\R^d}(1-|\xi|^2)^\beta_+\widehat{|\ffi|^*}(\xi)\,\mbox{d}\xi
\end{equation}
for every $S>0$ and every functions $\ffi$ with support of finite measure $S$.
We will then apply this to $\ffi_\Omega(x)=\ffi(x/\Omega)$ which has support of measure 
$\Omega^dS$. Note that
\begin{eqnarray*}
\int_{\R^d}(1-|\xi|^2)^\alpha_+\widehat{\ffi_\Omega}(\xi)\,\mbox{d}\xi
&=&\int_{\R^d}\Omega^d(1-|\xi|^2)^\alpha_+\widehat{\ffi}(\Omega\xi)\,\mbox{d}\xi\\
&=&\int_{\R^d}(1-|\xi|^2/\Omega^2)^\alpha_+\widehat{\ffi}(\xi)\,\mbox{d}\xi.
\end{eqnarray*}
On the other hand, $|\ffi_\Omega|^*(x)=|\ffi|^*(x/\Omega)$ therefore
$\widehat{|\ffi_\Omega|^*}(\xi)=\Omega^d\widehat{|\ffi|^*}(\Omega\xi)$ so that
$$
\int_{\R^d}(1-|\xi|^2)^\beta_+\widehat{|\ffi_\Omega|^*}(\xi)\,\mbox{d}\xi
=\int_{\R^d}(1-|\xi|^2/\Omega^2)^\beta_+\widehat{|\ffi|^*}(\xi)\,\mbox{d}\xi.
$$
It is thus enough to prove \eqref{eq:reduction}, {\it i.e.} to assume that
$\Omega=1$ in Theorem \ref{th:ds2}.
The beginning of the proof follows the lines of the proof of Proposition \ref{pro:ds}.

Using Parseval and Hardy-Littlewood's Symmetrization Lemma \ref{lem:rear2},
we obtain
\begin{eqnarray*}
\abs{\int_{\R^d}(1-|\xi|^2)_+^\alpha\widehat{\ffi}(\xi)\,\mathrm{d}\xi}
&=&\abs{\int_{\R^d}\widehat{m_\alpha}(t)\ffi(t)\,\mbox{d}t}\\
&\leq&
\int_{\R^d}|\widehat{m_\alpha}|^*(t)|\ffi|^*(t)\,\mbox{d}t.
\end{eqnarray*}
Recall that $\widehat{m_\alpha}(t)=C_{d,\alpha}|t|^{-d/2-\alpha}J_{d/2+\alpha}(2\pi|t|)$.
We will therefore write $|\widehat{m_\alpha}|^*(t)=\mathcal{J}_{d/2+\alpha}^*(|t|)$.

On the other hand
$$
\int_{\R^d}(1-|\xi|^2)_+^\beta\widehat{|\ffi|^*}(\xi)\,\mathrm{d}\xi
=\int_{\R^d}C_{d,\beta}|t|^{-d/2-\beta}J_{d/2+\beta}(2\pi|t|)
|\ffi|^*(t)\,\mbox{d}t.
$$
But, as $|\ffi|^*$ is a radial function that is ``radially decreasing'', we may 
write $|\ffi|^*(t)=\int\chi_{B(0,s)}(t)\,\mbox{d}\mu(s)$
where $\mu$ is a non-negative measure and $\chi_{B(0,s)}$ is the characteristic
function of the ball of center $0$ and radius $s$. It is thus enough to prove that,
for $s\leq S$,
\begin{eqnarray*}
\int_{\R^d}\mathcal{J}_{d/2+\alpha}^*(|t|)\chi_{B(0,s)}(t)\,\mbox{d}t&&\nonumber\\
&&\hspace{-3cm}\leq\quad\psi(S)
\int_{\R^d}C_{d,\beta}|t|^{-d/2-\beta}J_{d/2+\beta}(2\pi|t|)
\chi_{B(0,s)}(t)\,\mbox{d}t.
\end{eqnarray*}
Switching to polar coordinates, this is equivalent to proving that:
$$
\int_0^s\mathcal{J}_{d/2+\alpha}^*(r)r^{d-1}\,\mbox{d}r
\leq \psi(S)\int_0^s r^{d/2-\beta-1}J_{d/2+\beta}(2\pi r)
\,\mbox{d}t.
$$

Now, it is  well known that the graph of a Bessel function $J_\nu$ ($\nu>-1$)
consists of ``waves'' that are alternately positive and negative.
Moreover, the areas of these waves is strictly deceasing. That is, if we denote by
$j_{\nu,k}$ the $k$th zero of $J_\nu$, then
\begin{equation}
\label{eq:makai}
\int_{j_{\nu,k}}^{j_{\nu,k+1}}|J_\nu(r)|\,\mbox{d}r
\end{equation}
is strictly decreasing. As a consequence, we obtain that, for $\gamma\geq 0$,
\begin{eqnarray*}
\int_{j_{\nu,k}}^{j_{\nu,k+1}}r^{-\gamma}|J_\nu(r)|\,\mbox{d}r&>&
\int_{j_{\nu,k}}^{j_{\nu,k+1}}j_{\nu,k+1}^{-\gamma}|J_\nu(r)|\,\mbox{d}r\\
&>&\int_{j_{\nu,k+1}}^{j_{\nu,k+2}}j_{\nu,k+1}^{-\gamma}|J_\nu(r)|\,\mbox{d}r\\
&>&\int_{j_{\nu,k+1}}^{j_{\nu,k+2}}r^{-\gamma}|J_\nu(r)|\,\mbox{d}r.
\end{eqnarray*}
It follows that, for $0\leq\gamma<\nu+1$
$$
\int_{0}^{x}r^{-\gamma}|J_\nu(r)|\,\mbox{d}r\geq C_{\nu,\gamma}>0.
$$

On the other hand, from $|J_\nu(t)|\leq C_\nu (1+t)^{-1/2}$ and \eqref{eq:newrear}, we get
$\mathcal{J}_{d/2+\alpha}^*(|t|)\leq C_{d,\alpha}(1+|t|)^{-\frac{1}{2}(d+1+2\alpha)}$
thus
$$
\int_0^s\mathcal{J}_{d/2+\alpha}^*(r)r^{d-1}\,\mbox{d}r\leq \begin{cases}
C(1+s)^{\frac{1}{2}(d-2\alpha-1)}&\mbox{if }\alpha<\frac{d-1}{2}\\
C\ln(1+s)&\mbox{if }\alpha=\frac{d-1}{2}\\
C&\mbox{if }\alpha>\frac{d-1}{2}
\end{cases}
$$
which completes the proof.
\end{proof}

\begin{rema}
The behavior of Bessel functions used here is a classical result was proved originally by Cooke
\cite{Co1,Co2} using delicate estimates involving the Lommel functions and several properties
of Bessel functions. This is linked to the Gibbs Phenomena which is best seen
for $\nu=1/2$ (the case $d=1$, $\alpha=0$) for which $J_{1/2}(t)=\dst\frac{\sin t}{t}$.
The following estimates can then be found in many elementary courses in Fourier Analysis:
$$
\int_0^s\frac{\sin 2\pi t}{\pi t}\geq \begin{cases}s&\mbox{if }0\leq s\leq 1/2\\
2/5&\mbox{if } s\geq 1/2\end{cases}
$$
and $\dst\int_0^s\frac{\sin 2\pi t}{\pi t}\to\frac{1}{2}$ as $s\to+\infty$.

In \cite{Ma}, Makai proved \eqref{eq:makai} for $\nu>-1$ in a simpler way using a
differential equation approach of Sturm-Liouville type. A particularly simple proof of Cooke's
Theorem has been devised by Steinig in \cite{St}.

The range of $\gamma$'s can be extended to some negative values $\gamma>-\gamma(\nu)$
where $\gamma(\nu)$ is  defined in an implicite form for $-1<\nu<-1/2$, 
(Askey and Steinig \cite{AS}) and $\gamma(\nu)=-1/2$ for $\nu\ge -1/2$ (Gasper \cite{Ga}).
This allows to extend the theorem to $\beta\geq\frac{d-1}{2}$.
Further results may be found in \cite{MR}.
\end{rema}

The argument in the proof may be slightly modified to obtain the following:

\begin{coro}\label{cor:ds}
Let $d$ be an integer and let $\beta\geq\frac{d}{2}$ and $\alpha>-1/2$.
Let $S,\Omega>0$.
Then there exists a constant 
$C=C(d,\alpha,\beta)$ 
such that $\ffi\in L^1(\R^d)$ with support of finite measure $S$,
for every $a\in\R^d$,
\begin{eqnarray*}
\Omega^{-d}\int_{\R^d}|\widehat{\ffi}(\xi)|(1-|\xi-a|^2/\Omega^2)_+^\alpha\,\mathrm{d}\xi&&\\
&&\hspace{-3cm}\leq\quad
C(1+\Omega^d S)^{d/2}
\int_{\R^d}\Omega^{-d}(1-|\xi|^2/\Omega^2)_+^\beta
\widehat{|\ffi|^*}(\xi)\,\mathrm{d}\xi.
\end{eqnarray*}

More generally, if $\psi\in L^2(\R^d)\cap L^1(\R^d)$ then, for every $\ffi\in L^1(\R^d)$ with support of 
finite measure $S$,
\begin{eqnarray*}
\int_{\R^d}|\widehat{\ffi}(\xi)||\psi(\xi/\Omega)|\,\mathrm{d}\xi&&\\
&&\hspace{-3cm}\leq\ 
C\bigl(\norm{\psi}_1^{2/d}+\norm{\psi}_2^{2/d}\bigr)^{d/2}(1+\Omega^d S)^{d/2}
\int_{\R^d}(1-|\xi|^2/\Omega^2)_+^\beta
\widehat{|\ffi|^*}(\xi)\,\mathrm{d}\xi.
\end{eqnarray*}
\end{coro}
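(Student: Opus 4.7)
The plan is to mimic the proof of Theorem \ref{th:ds2} down to the key pointwise comparison between two radial integrals, using a duality trick to linearise the absolute value appearing on the left-hand side. First I would perform exactly the same scaling and modulation reduction as in Theorem \ref{th:ds2} (a modulation $\ffi(t)\mapsto e^{-2i\pi at}\ffi(t)$ does not affect $|\ffi|^*$ or the measure of the support), so that it is enough to establish
\[
\int_{\R^d}|\widehat\ffi(\xi)|m_\alpha(\xi)\,\mathrm{d}\xi\leq C(1+S)^{d/2}\int_{\R^d}m_\beta(\xi)\widehat{|\ffi|^*}(\xi)\,\mathrm{d}\xi
\]
for $\ffi\in L^1(\R^d)$ with support of measure $S$.

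For the linearisation, I would write $|\widehat\ffi|=\overline u\widehat\ffi$ with $u=\widehat\ffi/|\widehat\ffi|$ on $\set{\widehat\ffi\ne 0}$ and $u=0$ elsewhere, so $|u|\le 1$. Fubini then gives $\int|\widehat\ffi|m_\alpha\,\mathrm{d}\xi=\int\ffi(t)\widehat{\overline u m_\alpha}(t)\,\mathrm{d}t$, and Lemma \ref{lem:rear2} (Hardy-Littlewood) controls this by $\int|\ffi|^*(t)\bigl|\widehat{\overline u m_\alpha}\bigr|^*(t)\,\mathrm{d}t$. Since $\ffi$ has support of measure $S$, $|\ffi|^*$ is supported in $B(0,R)$ with $|B(0,R)|=S$, and its layer-cake representation writes it as $|\ffi|^*=\int_0^R\chi_{B(0,s)}\,\mathrm{d}\mu(s)$ for some positive measure $\mu$ on $[0,R]$. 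Applying the same representation on the right, matters reduce to the pointwise comparison, for each $s\in[0,R]$,
\[
\int_{B(0,s)}\bigl|\widehat{\overline u m_\alpha}\bigr|^*(t)\,\mathrm{d}t\leq C(1+S)^{d/2}\int_{B(0,s)}\widehat{m_\beta}(t)\,\mathrm{d}t.
\]

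For the left side I would use two estimates uniform in $u$: $\norm{\widehat{\overline u m_\alpha}}_\infty\le\norm{m_\alpha}_1$ yields (after integration) a bound $\lesssim s^d$, and Plancherel $\norm{\widehat{\overline u m_\alpha}}_2\le\norm{m_\alpha}_2$ combined with Cauchy-Schwarz yields a bound $\lesssim s^{d/2}$. For the right side, passing to polar coordinates recasts things as the partial Bessel integral $\int_0^s r^{d/2-\beta-1}J_{d/2+\beta}(2\pi r)\,\mathrm{d}r$, which is already analysed in the proof of Theorem \ref{th:ds2}: the Cooke-Makai-Steinig estimate, valid under $\beta\ge d/2$, yields a lower bound $\gtrsim\min(s^d,1)$. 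Comparing, the ratio of the two sides is at most a constant for $s\le 1$ and at most $CR^{d/2}\le C(1+S)^{1/2}\le C(1+S)^{d/2}$ for $1\le s\le R\le cS^{1/d}$, so reintegrating against $\mathrm{d}\mu(s)$ delivers the first inequality. The second, more general form follows by running the same argument with $\psi$ in place of $m_\alpha$: the two bounds on the left become $s^d\norm{\psi}_1$ and $s^{d/2}\norm{\psi}_2$, and optimising their minimum against the Bessel lower bound $\min(s^d,1)$ over $s\in[0,R]$ produces exactly the factor $(\norm{\psi}_1^{2/d}+\norm{\psi}_2^{2/d})^{d/2}$ in the statement. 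The main obstacle throughout is the Bessel lower bound, which is the same oscillatory ingredient driving Theorem \ref{th:ds2}; the hypothesis $\beta\ge d/2$ is precisely what guarantees that the partial Bessel integral stays bounded below by a positive constant uniformly in large $s$.
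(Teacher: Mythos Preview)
Your proposal is correct and follows essentially the same route as the paper's sketch: linearise $|\widehat\ffi|$ via a unimodular factor, apply Parseval and Hardy--Littlewood, layer-cake $|\ffi|^*$, and compare the resulting radial integrals using the $L^\infty\cap L^2$ bounds on $\widehat{\overline u m_\alpha}$ against the Cooke--Makai lower bound on the Bessel side. The only cosmetic difference is that the paper packages the two bounds into a single pointwise estimate $|\widehat{\overline u m_\alpha}|^*(t)\le C(1+|t|)^{-d/2}$ via Lemma~\ref{lem:estrear}, whereas you keep the $s^d$ and $s^{d/2}$ integral bounds separate and split into the regimes $s\le 1$ and $s\ge 1$ explicitly; your version is in fact slightly more careful about the small-$s$ regime, which the paper's sketch leaves implicit.
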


\begin{proof}[Sketch of proof]
Let us write $|\widehat{\ffi}|=\widehat{\ffi}$ then
\begin{eqnarray*}
\int_{\R^d}(1-|\xi|^2)_+^\alpha|\widehat{\ffi}(\xi)|\,\mathrm{d}\xi
&=&\int_{\R^d}e^{i\phi(\xi)}(1-|\xi|^2)_+^\alpha\widehat{\ffi}(\xi)\,\mathrm{d}\xi\\
&=&\abs{\int_{\R^d}[e^{i\phi(\cdot)}(1-|\cdot|^2)_+^\alpha]\widehat{\ \ }(t)\ffi(t)\,\mbox{d}t}\\
&\leq&
\int_{\R^d}|\mu_\alpha|^*(t)|\ffi|^*(t)\,\mbox{d}t
\end{eqnarray*}
where $\mu_\alpha$ is the Fourier transform of $e^{i\phi(\xi)}(1-|\xi|^2)_+^\alpha$.
One can not expect any better behavior of this function then to be in $L^2\cap L^\infty$
thus, with Lemma \ref{lem:estrear}, $|\mu_\alpha|^*(t)\leq C(1+|t|)^{-d/2}$. Therefore
$$
\int_0^s|\mu_\alpha|^*(r)r^{d-1}\,\mbox{d}r\leq C(1+s)^{d/2}
$$
(with the obvious abuse of notation). The remaining of the proof of the corollary
follows the line of the previous proof. 

The second statement is just a refinement of the previous one. We have to estimate
$|\widehat{\psi}|^*$ for which we use that $\|\widehat{\psi}\|_2=\norm{\psi}_2$
thus, from Lemma \ref{lem:estrear}, $|\widehat{\psi}|^*(t)\leq C\norm{\psi}_2|t|^{-d/2}$
and $\|\widehat{\psi}\|_\infty\leq\norm{\psi}_1$ thus $|\widehat{\psi}|^*(t)\leq C\norm{\psi}_1$.
Combining both estimates, we get 
$$
|\widehat{\psi}|^*(t)\leq C\bigl(\norm{\psi}_1^{2/d}+\norm{\psi}_2^{2/d}\bigr)^{d/2}(1+|t|)^{-d/2}.
$$
\end{proof}

\medskip

Finally, assume that $\ffi\in L^2$ with support of finite measure $S$.
Then $\ffi\in L^1$ and $|\widehat{\ffi}(\xi)|\leq\norm{\ffi}_1\leq S^{1/2}\norm{\ffi}_2$. Thus
\begin{eqnarray*}
\int_{B(0,\Omega)}|\widehat{\ffi}(\xi)|^2\,\mbox{d}\xi&\leq&
S^{1/2}\norm{\ffi}_2\int_{B(0,\Omega)}|\widehat{\ffi}(\xi)|\,\mbox{d}\xi\\
&\leq& \kappa(1+\Omega^d S)^{\frac{d}{2}}S^{1/2}\norm{\ffi}_2
\int_{B(0,\Omega)}\widehat{|\ffi|^*}(\xi)\,\mbox{d}\xi
\end{eqnarray*}
with Corollary \ref{cor:ds}. Using Cauchy-Schwarz, we thus get
$$
\int_{B(0,\Omega)}|\widehat{\ffi}(\xi)|^2\,\mbox{d}\xi\leq 
\kappa(1+\Omega^d S)^{\frac{d+1}{2}}\norm{\ffi}_2
\left(\int_{B(0,\Omega)}\abs{\widehat{|\ffi|^*}(\xi)}^2\,\mbox{d}\xi\right)^{1/2}.
$$
It turns out that this estimate may be improved. This is done in Section \ref{sec.mont} by
adapting a result originally proved for Fourier series in \cite{Mo}.

\section{The $L^2$ Theorem}\label{sec.mont}

\begin{theorem}
\label{th:mont}
Let $d\geq 1$ be an integer. Then there exists a constant $\kappa_d$ such that,
for every set $\Sigma\subset\R^d$ of finite positive measure and every $\ffi\in L^2(\R^d)$, 
\begin{equation}
\label{eq:mont}
\int_\Sigma|\widehat{\ffi}(\xi)|^2\,\mathrm{d}\xi\leq 
\kappa_d\int_{\Sigma^*}\abs{\widehat{|\ffi|^*}(\xi)}^2\,\mathrm{d}\xi.
\end{equation}
\end{theorem}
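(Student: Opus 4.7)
The plan is to pass to the convolution side via Plancherel, to symmetrize both $\ffi$ and $\widehat{\chi_\Sigma}$ using Riesz's rearrangement inequality, and to reduce the remaining statement to a single kernel comparison which is the heart of the proof. First I would introduce $k\in L^2(\R^d)$ defined by $\widehat{k}=\chi_\Sigma$, so that $|k(x)|=|\widehat{\chi_\Sigma}(x)|$ (using conjugate symmetry of Fourier transforms of real functions). Plancherel then gives
$$
\int_\Sigma|\widehat{\ffi}(\xi)|^2\,\mathrm{d}\xi = \|\chi_\Sigma\widehat{\ffi}\|_2^2 = \|k*\ffi\|_2^2,
$$
while the same formula applied with $\Sigma$ replaced by $B(0,\tau)$, together with the fact that $\chi_{B(0,\tau)}$ is real and even, gives
$$
\int_{B(0,\tau)}\bigl|\widehat{|\ffi|^*}(\xi)\bigr|^2\,\mathrm{d}\xi = \bigl\|\widehat{\chi_{B(0,\tau)}}*|\ffi|^*\bigr\|_2^2.
$$

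Next, I would combine the pointwise estimate $|k*\ffi|\leq|k|*|\ffi|$ with the inequality $\|f*g\|_2\leq\|f^**g^*\|_2$ for non-negative $f,g$. This last inequality is a direct consequence of Lemma \ref{lem:riesz} together with duality: since $\|f*g\|_2=\sup\{\langle f*g,h\rangle\tq h\geq 0,\,\|h\|_2\leq 1\}$, Riesz's inequality yields $\langle f*g,h\rangle\leq\langle f^**g^*,h^*\rangle$, and rearrangement preserves the $L^2$ norm. Applied to $f=|k|$, $g=|\ffi|$, this gives
$$
\int_\Sigma|\widehat{\ffi}|^2\leq\bigl\||\widehat{\chi_\Sigma}|^**|\ffi|^*\bigr\|_2^2,
$$
so the theorem would follow from the kernel comparison
$$
\bigl\||\widehat{\chi_\Sigma}|^**F\bigr\|_2^2\leq\kappa_d\,\bigl\|\widehat{\chi_{B(0,\tau)}}*F\bigr\|_2^2
$$
for every radial decreasing $F\geq 0$ (take $F=|\ffi|^*$). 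Equivalently, by Plancherel applied to both sides,
$$
\int\bigl|\widehat{|\widehat{\chi_\Sigma}|^*}(\xi)\bigr|^2|\widehat{F}(\xi)|^2\,\mathrm{d}\xi \leq \kappa_d\int_{B(0,\tau)}|\widehat{F}(\xi)|^2\,\mathrm{d}\xi.
$$

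The hard part will be this final kernel comparison. The two kernels $|\widehat{\chi_\Sigma}|^*$ and $\widehat{\chi_{B(0,\tau)}}$ share the $L^\infty$ bound $|\Sigma|$ and the $L^2$ bound $|\Sigma|^{1/2}$, but only the second arises as the Fourier transform of a genuine ball; the first is merely the symmetric decreasing rearrangement of an oscillating Bessel-type function. My strategy would be to combine the pointwise envelope $|\widehat{\chi_\Sigma}|^*(x)\leq\min\bigl(|\Sigma|,\,C_d|\Sigma|^{1/2}|x|^{-d/2}\bigr)$, coming from the $L^\infty$ and $L^2$ bounds on $\widehat{\chi_\Sigma}$ via Lemma \ref{lem:estrear}, with the Cooke--Makai--Gasper positivity of partial integrals of Bessel functions recalled after Theorem \ref{th:ds2}. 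These positivity estimates, invoked through the layer-cake decomposition of the symmetric decreasing convolution $|\ffi|^**|\ffi|^*$ into centered balls, produce a uniform lower bound on the right-hand side strong enough to absorb the upper bound coming from the envelope on the left-hand side, yielding the dimensional constant $\kappa_d$. This Bessel-function positivity, adapted to the setting of rearrangements of Fourier transforms, constitutes the technical core of the proof and is the direct analogue of the argument used by Montgomery in \cite{Mo} in the Fourier-series setting.
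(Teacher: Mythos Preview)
Your reduction via Plancherel and Riesz is valid up to the kernel comparison, but that comparison is \emph{false}, and no strengthening of the right-hand side via Bessel positivity can repair it. The loss already occurs at the step $|k*\ffi|\leq|k|*|\ffi|$: this discards the oscillation of $k=\check{\chi_\Sigma}$, and that oscillation is essential. Concretely, take $d=1$, $\Sigma=[-\tau,\tau]$ and $\ffi=|\ffi|^*=\chi_{[-R,R]}$. Then $|k(x)|=\bigl|\sin(2\pi\tau x)/(\pi x)\bigr|$, whose distribution function at small levels $\lambda$ is of order $c/\lambda$; consequently $|k|^*(t)\gtrsim 1/|t|$ for large $|t|$, so $|k|^*\notin L^1$ and
\[
\bigl(|k|^* * \chi_{[-R,R]}\bigr)(x)\ \gtrsim\ \log R\qquad\text{for }|x|\leq R/2.
\]
Hence $\bigl\|\,|\widehat{\chi_\Sigma}|^* * |\ffi|^*\bigr\|_2^2\gtrsim R(\log R)^2$, whereas
\[
\int_{\Sigma^*}\bigl|\widehat{|\ffi|^*}(\xi)\bigr|^2\,\mathrm{d}\xi=\int_{-\tau}^\tau\Bigl(\frac{\sin 2\pi R\xi}{\pi\xi}\Bigr)^2\mathrm{d}\xi\ \sim\ 2R.
\]
Thus the inequality you reduce to fails by a factor $(\log R)^2$ as $R\to\infty$. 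Replacing $|k|^*$ by the envelope $\min\bigl(|\Sigma|,C|\Sigma|^{1/2}|x|^{-1/2}\bigr)$ only enlarges the left side further.

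The paper's argument (which is the direct analogue of Montgomery's) never takes the modulus of $\widehat{\chi_\Sigma}$ and does not use Riesz's inequality at all. For the upper bound it splits $\ffi$, not the kernel: one chooses a level set $\mathbb{D}=\{|\ffi|>\bar\lambda\}$ with $|\mathbb{D}|\leq|B(0,1/\tau)|$, writes $\widehat\ffi=f+g$ with $f=\widehat{\ffi\chi_{\mathbb{D}}}$ and $g=\widehat{\ffi\chi_{\R^d\setminus\mathbb{D}}}$, and combines $\|f\|_\infty\leq\int_{\mathbb{D}}|\ffi|$ with Parseval on $g$ to obtain a bound in terms of $\bigl(\int_{B(0,1/\tau)}|\ffi|^*\bigr)^2$ and $\int_{|x|\geq 1/\tau}(|\ffi|^*)^2$. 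For the matching lower bound on $\int_{\Sigma^*}|\widehat{|\ffi|^*}|^2$ it inserts the non-negative Fej\'er-type minorant $K_\tau\leq\chi_{B(0,\tau)}$ with $\check K_\tau=k_\tau^2\geq 0$, and exploits only the positivity and radial monotonicity of $|\ffi|^*$; the Cooke--Makai--Gasper estimates play no role here.
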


\begin{proof}[Proof] Let $\tau$ be defined by $|B(0,\tau)|=|\Sigma|$, that is
$B(0,\tau)=\Sigma^*$.

It is enough to prove the theorem for $\ffi\in L^1(\R^d)\cap L^2(\R^d)$.

For $\lambda>0$, let $\mathbb{D}_\ffi(\lambda)$ be the level set
$$
\mathbb{D}_\ffi(\lambda)=\{x\in\R^d\,:|\ffi(x)|>\lambda\}.
$$
Let us further choose $\bar\lambda$ to be the smallest non-negative real number $\lambda$
such that $|\mathbb{D}_\ffi(\lambda)|\leq|B(0,\tau^{-1})|$.
For simplicity of notation, we will  write $\mathbb{D}=\mathbb{D}_\ffi(\bar\lambda)$.
Let us further write $\widehat{\ffi}=f+g$ where
$$
f(\xi)=\int_{\mathbb{D}}\ffi(x)e^{-2i\pi\scal{x,\xi}}\,\mbox{d}x
\quad\mbox{and}\quad
g(\xi)=\int_{\R^d\setminus\mathbb{D}}\ffi(x)e^{-2i\pi\scal{x,\xi}}\,\mbox{d}x.
$$
First
\begin{eqnarray*}
\int_\Sigma|f(\xi)|^2\,\mbox{d}\xi&\leq&\norm{f}_\infty^2\int_\Sigma 1\,\mbox{d}\xi
\leq\left(\int_{\mathbb{D}}|\ffi(x)|\,\mbox{d}x\right)^2|\Sigma|\\
&\leq&|B(0,\tau)|\left(\int_{B(0,\tau^{-1})}|\ffi|^*(x)\,\mbox{d}x\right)^2.
\end{eqnarray*}
Further, using Parseval's Identity
$$
\int_\Sigma|g(\xi)|^2\,\mbox{d}\xi
\leq\int_{\R^d}|g(\xi)|^2\,\mbox{d}\xi=
\int_{\R^d\setminus\mathbb{D}}|\ffi(x)|^2\,\mbox{d}x
=\int_{|x|\geq\tau^{-1}}|\ffi|^*(x)^2\,\mbox{d}x.
$$
As $|\widehat{\ffi}|^2\leq 2|f|^2+2|g|^2$, we get
\begin{equation}
\label{eq:mont+}
\int_\Sigma|\widehat{\ffi}(\xi)|^2\,\mbox{d}\xi\leq
2|B(0,\tau)|\left(\int_{B(0,\tau^{-1})}|\ffi|^*(x)\,\mbox{d}x\right)^2
+2\int_{|x|\geq\tau^{-1}}|\ffi|^*(x)^2\,\mbox{d}x.
\end{equation}

On the other hand, let $K=\chi_{B(0,1)}*\chi_{B(0,1)}$ and note that
\begin{enumerate}
\renewcommand{\theenumi}{\roman{enumi}}
\item $0\leq K\leq K(0)=c_d:=|B(0,1)|=\frac{\pi^{d/2}}{\Gamma(d/2+1)}$, 
\item $K$ is supported in $B(0,2)$,
\item $K=\widehat{k^2}$ where
$\dst
k(x)=\widehat{\chi_{B(0,1)}}(x)=\frac{J_{d/2}(2\pi|x|)}{|x|^{d/2}}.
$
\end{enumerate}
Further, let $K_\tau(\xi)=\frac{1}{c_d}K(2\xi/\tau)$ and 
$k_\tau(x)=\left(\frac{\tau^d}{c_d2^d}\right)^{1/2}k(x\tau/2)$.
A simple computation then shows that
\begin{eqnarray}
\int_{B(0,\tau)}\abs{\widehat{|\ffi|^*}(\xi)}^2\,\mbox{d}\xi
&\geq&\int_{\R^d}K_\tau(\xi)\abs{\widehat{|\ffi|^*}(\xi)}^2\,\mbox{d}\xi\notag\\
&&\hspace{-2cm}=\quad
\int_{\R^d}\int_{\R^d}|\ffi|^*(x)|\ffi|^*(y)\int_{\R^d}K_\tau(x)e^{2i\pi\xi(x-y)}
\,\mbox{d}\xi\,\mbox{d}x\,\mbox{d}y\notag\\
&&\hspace{-2cm}=\quad
\int_{\R^d}\int_{\R^d}|\ffi|^*(x)|\ffi|^*(y)k_\tau^2(x-y)\,\mbox{d}x\,\mbox{d}y
:=I
\label{eq:montI}
\end{eqnarray}

Now, using the Poisson representation of Bessel functions,
\begin{eqnarray*}
k_\tau(u)&=&\left(\frac{\tau^d}{c_d2^d}\right)^{1/2}k(u\tau/2)
=\frac{J_{d/2}(\pi\tau|u|)}{\sqrt{c_d}|x|^{d/2}}\\
&=&\frac{2\sqrt{c_d}\tau^{d/2}}{(2\pi)\Gamma(1/2)}
\int_0^1(1-t^2)^{d-1/2}\cos(\pi|u|\tau t)\,\mbox{d}t.
\end{eqnarray*}
It is then obvious that, for $0\leq|u|\leq1/\tau$,
\begin{eqnarray*}
k_\tau(u)&\geq& k_\tau(1/\tau)=
\frac{2\sqrt{c_d}\tau^{d/2}}{(2\pi)^{d/2}\Gamma(1/2)}
\int_0^1(1-t^2)^{d-1/2}\cos(\pi t)\,\mbox{d}t\\
&=&(\upsilon_d|B(0,\tau)|)^{1/2}
\end{eqnarray*}
where
$$
\upsilon_d=\left(\frac{2}{(2\pi)^{d/2}\Gamma(1/2)}
\int_0^1(1-t^2)^{d-1/2}\cos(\pi t)\,\mbox{d}t\right)^2.
$$

We will now write $I\geq I_1+I_2$. For $I_1$,
we restrict the integration in \eqref{eq:montI} to
$x,y\in B(0,1/2\tau)$ and for $I_2$, the integration is restricted over
$|x|\geq 1/\tau$ and $|y-x|\leq1/tau$. 

Let us first estimate $I_1$:
\begin{eqnarray*}
I_1&\geq&
\upsilon_d|B(0,\tau)|\iint_{x,y\in B(0,1/2\tau)}|\ffi|^*(x)|\ffi|^*(y)\,\mbox{d}x\,\mbox{d}y\\
&\geq&\upsilon_d|B(0,\tau)|\left(\int_{|x|\leq\frac{1}{2\tau}}|\ffi|^*(x)\,\mbox{d}x\right)^2.
\end{eqnarray*}

From
$$
\int_{|x|\leq\frac{1}{2\tau}}|\ffi|^*(x)\,\mbox{d}x=
\int_{|x|\leq\frac{1}{\tau}}|\ffi|^*(x)\,\mbox{d}x
-\int_{\frac{1}{2\tau}\leq|x|\leq\frac{1}{\tau}}|\ffi|^*(x)\,\mbox{d}x
$$
we deduce that
$$
\int_{|x|\leq\frac{1}{2\tau}}|\ffi|^*(x)\,\mbox{d}x\geq
\frac{1}{2^d}\int_{|x|\leq\frac{1}{\tau}}|\ffi|^*(x)\,\mbox{d}x.
$$
We thus obtain the etimate
\begin{equation}
\label{est1}
I_1\geq\frac{\upsilon_d}{2^d}|B(0,\tau)|\left(\int_{B(0,1/\tau)}|\ffi|^*(x)\,\mbox{d}x\right)^2.
\end{equation}

On the other hand, if $|y|<|x|$, then $|\ffi|^*(y)\geq|\ffi|^*(x)$, so that
\begin{eqnarray*}
\int_{\{y\in\R^d\,:|y|<|x|,|x-y|\leq\frac{1}{\tau}\}}|\ffi|^*(y)k_\tau^2(x-y)\,\mbox{d}y\\
&&\hspace{-3cm}\geq\quad|\ffi|^*(x)\int_{\{y\in\R^d\,:|y|<|x|,|x-y|\leq\frac{1}{\tau}\}}k_\tau^2(x-y)\,\mbox{d}y.\\
\end{eqnarray*}
Using the properties of $k_\tau$ we then obtain
\begin{eqnarray*}
\int_{\{y\in\R^d\,:|y|<|x|,|x-y|\leq\frac{1}{\tau}\}}|\ffi|^*(y)k_\tau^2(x-y)\,\mbox{d}y\\
&&\hspace{-3cm}\geq\quad|\ffi|^*(x)\int_{|t|\leq\frac{1}{\tau},|x+t|<|x|}k_\tau^2(t)\,\mbox{d}t\\
&&\hspace{-3cm}\geq\quad|\ffi|^*(x)k_\tau^2(1/\tau)|B(0,1/\tau)\cap B(x,|x|)|\\
&&\hspace{-3cm}\geq\quad|\ffi|^*(x)\upsilon_d|B(0,\tau)||B(0,1/2\tau)|\\
&&\hspace{-3cm}=\quad\frac{\upsilon_d|B(0,1)|^2}{2^d}|\ffi|^*(x)
\end{eqnarray*}
provided $|x|\geq\frac{1}{\tau}$, since then $\{|u|\leq\tau^{-1},|x+u|<|x|\}\supset B(\frac{1}{2\tau} x/|x|,1/2\tau)$.

Therefore, as for $I_2$ we have restricted the integration in \eqref{eq:montI} to $(x,y)$'s such that
$|x|\geq\frac{1}{\tau}$, $|y|<|x|$ and $|x-y|\leq\frac{1}{\tau}$, we obtain
\begin{equation}
\label{est2}
I_2\geq \frac{\upsilon_d|B(0,1)|^2}{2^d}\int_{|x|\geq\frac{1}{\tau}}|\ffi|^*(x)^2\,\mbox{d}x.
\end{equation}

From \eqref{est1} and \eqref{est2} we get that
\begin{eqnarray*}
\int_{B(0,\tau)}\abs{\widehat{|\ffi|^*}(\xi)}^2\,\mbox{d}\xi&\geq&
\frac{\upsilon_d\min\bigl(1,|B(0,1)|^2\bigr)}{2^{d+1}}\times\\
&&\hspace{-3cm}\times
\left(2|B(0,\tau)|
\left(\int_{B(0,1/\tau)}|\ffi|^*(x)\,\mbox{d}x\right)^2
+2\int_{|x|\geq\frac{1}{\tau}}|\ffi|^*(x)^2\,\mbox{d}x\right)\\
&\geq&\frac{\upsilon_d\min\bigl(1,|B(0,1)|^2\bigr)}{2^{d+1}}\int_\Sigma\abs{\widehat{\ffi}(\xi)}^2\,\mbox{d}\xi
\end{eqnarray*}
in view of \eqref{eq:mont+}, as claimed.
\end{proof}

\begin{coro}
Let $d,\geq1$ be an integer and let $\kappa_d$ be the constant given by the previous theorem.
Let $\psi$ be a non-negative function on $\R^d$ that vanishes at infinity. Then for every $\ffi\in L^2(\R^d)$,
$$
\int_{\R^d}\psi(\xi)|\widehat{\ffi}(\xi)|^2\,\mathrm{d}\xi
\leq \kappa_d\int_{\R^d}|\psi|^*(\xi)\abs{\widehat{|\ffi|^*}(\xi)}^2\,\mathrm{d}\xi.
$$
\end{coro}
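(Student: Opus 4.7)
The plan is to reduce the weighted inequality to Theorem \ref{th:mont} by slicing $\psi$ along its level sets. Since $\psi\geq 0$ vanishes at infinity, the super-level set $\set{\psi>\lambda}$ has finite measure for every $\lambda>0$, so Theorem \ref{th:mont} applies to each of them.

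First I would write the layer-cake representation
$$
\psi(\xi)=\int_0^{+\infty}\chi_{\set{\psi>\lambda}}(\xi)\,\mathrm{d}\lambda
$$
and use Tonelli to swap the integrals:
$$
\int_{\R^d}\psi(\xi)\abs{\widehat{\ffi}(\xi)}^2\,\mathrm{d}\xi
=\int_0^{+\infty}\left(\int_{\set{\psi>\lambda}}\abs{\widehat{\ffi}(\xi)}^2\,\mathrm{d}\xi\right)\mathrm{d}\lambda.
$$

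Next, for each $\lambda>0$, I would apply Theorem \ref{th:mont} to $\Sigma=\set{\psi>\lambda}$, obtaining
$$
\int_{\set{\psi>\lambda}}\abs{\widehat{\ffi}(\xi)}^2\,\mathrm{d}\xi
\leq\kappa_d\int_{\set{\psi>\lambda}^*}\abs{\widehat{|\ffi|^*}(\xi)}^2\,\mathrm{d}\xi.
$$
The crucial identity to invoke here is $\set{\psi>\lambda}^*=\set{|\psi|^*>\lambda}$, which is immediate from the definitions since both sides are open balls centered at $0$ whose common Lebesgue measure equals $\mu_\psi(\lambda)=\mu_{|\psi|^*}(\lambda)$ (recall that $\psi=|\psi|$ by hypothesis).

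Finally, I would reassemble by Tonelli and the layer-cake representation of $|\psi|^*$:
$$
\int_0^{+\infty}\int_{\set{|\psi|^*>\lambda}}\abs{\widehat{|\ffi|^*}(\xi)}^2\,\mathrm{d}\xi\,\mathrm{d}\lambda
=\int_{\R^d}|\psi|^*(\xi)\abs{\widehat{|\ffi|^*}(\xi)}^2\,\mathrm{d}\xi,
$$
which gives the claimed inequality. There is no real obstacle: the only points to check are the measurability of $\lambda\mapsto\int_{\set{\psi>\lambda}}|\widehat{\ffi}(\xi)|^2\,\mathrm{d}\xi$ (which follows from monotone convergence, since the map is monotone non-increasing in $\lambda$), and the finiteness of $|\set{\psi>\lambda}|$ for $\lambda>0$ so that Theorem \ref{th:mont} legitimately applies — both of which are free from our hypotheses.
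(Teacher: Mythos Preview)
Your proof is correct and follows essentially the same route as the paper: apply Theorem \ref{th:mont} to each super-level set $\{\psi>\lambda\}$, then integrate in $\lambda$ and use the layer-cake representation on both sides. The paper's argument is terser but identical in substance; your version spells out the identification $\{\psi>\lambda\}^*=\{|\psi|^*>\lambda\}$ and the measurability check, which the paper leaves implicit.
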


\begin{proof}[Proof]
For $\lambda>0$, Theorem \ref{th:mont} implies that
$$
\int_{\R^d}\chi_{\{\xi\,:\ \psi(\xi)>\lambda\}}|\widehat{\ffi}(\xi)|^2\,\mathrm{d}\xi
\leq \kappa_d\int_{\R^d}\chi_{\{\xi\,:\ \psi(\xi)>\lambda\}^*}\abs{\widehat{|\ffi|^*}(\xi)}^2\,\mathrm{d}\xi.
$$
Integrating this inequality over $\lambda>0$ and exchanging the order of integration gives the result.
This comes from the layer-cake representation for the left hand side and from the definition of $|\psi|^*$
for the right side.
\end{proof}

It seems natural to us to conjecture that if the characteristic function $\chi_\Sigma$ of $\Sigma$
is replaced by the characteristic function of its complement $\chi_{\Sigma^c}$ then the inequality in \eqref{eq:mont}
is reversed:

\medskip

\noindent{\bf Conjecture 1.}\\\
{\sl
Let $d\geq 1$ be an integer. Then there exists a constant $\kappa_d$ such that,
for every set $\Sigma\subset\R^d$ of finite positive measure and every $\ffi\in L^2(\R^d)$, 
\begin{equation}
\label{eq:montinv}
\int_{\R^d\setminus\Sigma^*}\bigl|\widehat{|\ffi|^*}(\xi)\bigr|^2\,\mathrm{d}\xi
\leq 
\kappa_d\int_{\R^d\setminus\Sigma}\bigl|\widehat{\ffi}(\xi)\bigr|^2\,\mathrm{d}\xi.
\end{equation}
}

Let us note that \eqref{eq:montinv} may be rewritten
$$
\int_{\R^d}\bigl|\widehat{|\ffi|^*}(\xi)\bigr|^2\,\mathrm{d}\xi
-\int_{\Sigma^*}\bigl|\widehat{|\ffi|^*}(\xi)\bigr|^2\,\mathrm{d}\xi
\leq 
\kappa_d\int_{\R^d}\bigl|\widehat{\ffi}(\xi)\bigr|^2\,\mathrm{d}\xi-
\int_{\Sigma}\bigl|\widehat{\ffi}(\xi)\bigr|^2\,\mathrm{d}\xi.
$$
As $\bigl\|\widehat{|\ffi|^*}\bigr\|_2=\bigl\|\widehat{\ffi}\bigr\|_2$,
this is equivalent to
$$
\int_{\Sigma}\bigl|\widehat{\ffi}(\xi)\bigr|^2\,\mathrm{d}\xi\leq
\left(1-\frac{1}{\kappa_d}\right)\int_{\R^d}\bigl|\widehat{\ffi}(\xi)\bigr|^2\,\mathrm{d}\xi
+\frac{1}{\kappa_d}\int_{\Sigma^*}\bigl|\widehat{|\ffi|^*}(\xi)\bigr|^2\,\mathrm{d}\xi.
$$

If this conjecture were true, then the following conjecture would follow:

\medskip

\noindent{\bf Conjecture 2.}\\\
{\sl
Let $d\geq 1$ be an integer. Then there exists a constant $C_d$
such that, if $S$ and $\Sigma$ are two sets of finite measure then, for every $\ffi\in L^2(\R^d)$,
$$
\norm{\ffi}_2^2\leq C_de^{C_d(|S||\Sigma|)^{1/d}}\left(\int_{\R^d\setminus S}\bigl|\ffi(x)\bigr|^2\,\mbox{d}x+
\int_{\R^d\setminus \Sigma}\bigl|\widehat{\ffi}(\xi)\bigr|^2\,\mbox{d}\xi\right).
$$
}
This conjecture has been proved in dimension $d=1$ by F. Nazarov \cite{Naz} and for $d\geq 2$
and either $S$ or $\Sigma$ convex by the author in \cite{Ja}. (The result was stated with a constant
of the form $C_de^{C_d\min(\omega(S)|\Sigma|^{1/d},\omega(\Sigma)|S|^{1/d})}$
where $\omega(S)$ --resp. $\omega(\Sigma)$-- is the mean width of $S$ --resp. $\Sigma$--
if this set is convex. But, it is a well known fact that $\omega(S)\leq C_d|S|^{1/d}$).

Let us now show how Conjecture 1 implies Conjecture 2. First, as is well known, it is equivalent to show that
$$
\int_{\Sigma}\bigl|\widehat{\ffi}(\xi)\bigr|^2\,\mbox{d}\xi
\leq C_de^{C_d(|S||\Sigma|)^{1/d}}
\int_{\R^d\setminus \Sigma}\bigl|\widehat{\ffi}(\xi)\bigr|^2\,\mbox{d}\xi
$$
for every $\ffi\in L^2$ with support in $S$. But, from Theorem \ref{th:mont},
$$
\int_{\Sigma}\bigl|\widehat{\ffi}(\xi)\bigr|^2\,\mbox{d}\xi
\leq\int_{\Sigma^*}\bigl|\widehat{|\ffi|^*}(\xi)\bigr|^2\,\mbox{d}\xi.
$$
Now $|\ffi|^*$ is supported in $S^*$, so that the particular case of Conjecture 2 that has already been proved in \cite{Ja} (and $|S^*|=|S|$, $|\Sigma^*|=|\Sigma|$)
implies that
$$
\int_{\Sigma}\bigl|\widehat{\ffi}(\xi)\bigr|^2\,\mbox{d}\xi
\leq C_de^{C_d(|S||\Sigma|)^{1/d}}\int_{\R^d\setminus\Sigma^*}\bigl|\widehat{|\ffi|^*}(\xi)\bigr|^2\,\mbox{d}\xi.
$$
Finally, once Conjecture 1 is established, this would imply that
$$
\int_{\Sigma}\bigl|\widehat{\ffi}(\xi)\bigr|^2\,\mbox{d}\xi
\leq \kappa_dC_de^{C_d(|S||\Sigma|)^{1/d}}\int_{\R^d\setminus\Sigma}\bigl|\widehat{\ffi}(\xi)\bigr|^2\,\mbox{d}\xi.
$$

Note that it is enough to establish Conjecture 1 for $\ffi$ with support $S$ of finite measure and that
one can allow for $\kappa_d=\kappa_d(S,\Sigma)=Ce^{C(|S||\Sigma|)^{1/d}}$.

\section{An application to the Free Shr\"odinger Equation}
Let us recall that the solution of the Free Shr\"odinger Equation
\begin{equation}
\label{eq:shr}
\begin{cases}\dst
i\partial_t v+\frac{1}{4\pi}\Delta_x^2 v=0\\
v(x,0)=v_0(x)\\
\end{cases}
\end{equation}
with initial data $v_0\in L^2(\R^d)$ has solution
\begin{eqnarray}
v(x,t)&=&\int_{\R^d}e^{-i\pi|\xi|^2t+2i\pi \scal{x,\xi}}\widehat{v_0}(\xi)\mbox{d}\xi\nonumber\\
&=&
\bigl(e^{-i\pi|\xi|^2t}\widehat{v_0}\bigr)\check\ (x)\label{solshr1}\\
&=&\frac{e^{i\pi|x|^2/t}}{(it)^{d/2}}\int_{\R^d}e^{i\pi|y|^2/t}v_0(y)e^{-2i\pi \scal{y,\xi}/t}\mbox{d}y\nonumber\\
&=&\frac{e^{i\pi|x|^2/t}}{(it)^{d/2}}\widehat{e^{i\pi|\cdot|^2/t}v_0}(x/t).
\label{solshr2}
\end{eqnarray}
We may thus apply Corollary \ref{cor:ds} and Theorem \ref{th:mont} to obtain a control of
$v(x,t)$ over sets of finite measure. This is stated in terms of $\widehat{|\widehat{v_0}|^*}$
for small time and of $\widehat{|v_0|^*}$ for large time. More precisely,
using respectively \eqref{solshr1} and \eqref{solshr2}, we obtain the following:

\begin{theorem}
Let $v$ be the solution of \eqref{eq:shr} and let $\Sigma$ be a set of finite measure
and let $\tau$ be defined by $|B(0,\tau)|=|\Sigma|$. Let $\beta\geq\frac{d-1}{2}$.

\begin{enumerate}
\item For every $t>0$,
$$
\int_\Sigma |v(x,t)|^2\,\mbox{d}x\leq C\int_{B(0,\tau)}\bigl|\widehat{|\widehat{v_0}|^*}(\xi)\bigr|^2\,\mbox{d}\xi.
$$
Moreover, if $\widehat{v_0}$ has support of finite measure $S$,
then
$$
\int_{\Sigma}|v(x,t)|\,\mbox{d}x\leq C|\Sigma|^{1/2}(1+|\Sigma|^{1/d})^{d/2}(1+S)
\int_{\R^d}(1-|\xi|^2)_+^\beta\widehat{|\widehat{v_0}|^*}(\xi)\,\mbox{d}\xi.
$$

\item For every $t>0$,
\begin{eqnarray}
\int_\Sigma |v(x,t)|^2\,\mbox{d}x
&\leq& Ct^{-d}\int_{B(0,\tau)}\bigl|\widehat{|v_0|^*}(\xi/t)\bigr|^2\,\mbox{d}\xi.\nonumber\\
&=&\int_{B(0,\tau/t)}\bigl|\widehat{|v_0|^*}(\xi)\bigr|^2\,\mbox{d}\xi.\label{eq:stri}
\end{eqnarray}
Moreover, if $v_0$ has support of finite measure $S$,
then
$$
\int_{\Sigma}|v(x,t)|\,\mbox{d}x\leq C|\Sigma|^{1/2}(1+|\Sigma|^{1/d}/t)^{d/2}(1+S)^{d/2}
\int_{\R^d}(1-|\xi|^2)_+^\beta\widehat{|v_0|^*}(\xi)\,\mbox{d}\xi.
$$
\end{enumerate}
Here $C$ is a constant that only depends on $d$ and on $\beta$.
\end{theorem}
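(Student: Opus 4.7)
The plan is to reduce both parts to Theorem~\ref{th:mont} and Corollary~\ref{cor:ds} by exploiting the fact that the two dispersive identities \eqref{solshr1} and \eqref{solshr2} each express $v(\cdot,t)$ as (a rescaling of) the Fourier transform of an auxiliary function whose modulus agrees pointwise with $|\widehat{v_0}|$ or $|v_0|$. Since multiplication by a unimodular factor alters neither the modulus, nor the symmetric decreasing rearrangement, nor the measure of the support, the hypotheses needed to apply the general inequalities will be directly available.

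For part (1), I set $g_t(\xi):=e^{-i\pi|\xi|^2t}\widehat{v_0}(\xi)$. Then $|g_t|=|\widehat{v_0}|$, so $|g_t|^*=|\widehat{v_0}|^*$; moreover the support of $g_t$ equals that of $\widehat{v_0}$, hence has the same measure $S$ when the hypothesis of the second estimate holds. Identity \eqref{solshr1} reads $v(x,t)=\widehat{g_t}(-x)$, so the change of variable $y=-x$ gives
$$
\int_\Sigma|v(x,t)|^2\,\mbox{d}x=\int_{-\Sigma}|\widehat{g_t}(y)|^2\,\mbox{d}y.
$$
Since $|-\Sigma|=|\Sigma|$, the radius $\tau$ defined by $|B(0,\tau)|=|\Sigma|$ is unchanged, and Theorem~\ref{th:mont} applied to $g_t$ on $-\Sigma$ yields the first inequality. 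For the $L^1$ bound, I would apply the more general form of Corollary~\ref{cor:ds} to $\ffi=g_t$ with $\Omega=1$ and $\psi=\chi_{-\Sigma}$: using $\norm{\psi}_1=|\Sigma|$, $\norm{\psi}_2=|\Sigma|^{1/2}$ one computes $(\norm{\psi}_1^{2/d}+\norm{\psi}_2^{2/d})^{d/2}=|\Sigma|^{1/2}(1+|\Sigma|^{1/d})^{d/2}$, and the $S$-dependence comes out of the factor $(1+S)^{d/2}$ from the corollary.

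For part (2), I set $h_t(y):=e^{i\pi|y|^2/t}v_0(y)$, so that $|h_t|=|v_0|$, $|h_t|^*=|v_0|^*$, and $h_t$ has the same support as $v_0$. Identity \eqref{solshr2} gives $|v(x,t)|^2=t^{-d}|\widehat{h_t}(x/t)|^2$, and the dilation $y=x/t$ produces
$$
\int_\Sigma|v(x,t)|^2\,\mbox{d}x=\int_{\Sigma/t}|\widehat{h_t}(y)|^2\,\mbox{d}y.
$$
Since $|\Sigma/t|=|\Sigma|/t^d$, we have $(\Sigma/t)^*=B(0,\tau/t)$; applying Theorem~\ref{th:mont} with this set and the substitution $|h_t|^*=|v_0|^*$ delivers the second equality in \eqref{eq:stri}, and the first form then follows by the change of variable $\xi\mapsto\xi/t$ which accounts for the $t^{-d}$ prefactor. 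The $L^1$ bound is obtained analogously: after the dilation, Corollary~\ref{cor:ds} applied to $h_t$ with $\psi=\chi_{\Sigma/t}$ produces the factors $(1+|\Sigma|^{1/d}/t)^{d/2}$ from $(\norm{\chi_{\Sigma/t}}_1^{2/d}+\norm{\chi_{\Sigma/t}}_2^{2/d})^{d/2}$ and the $S$-factor from the $(1+\Omega^dS)^{d/2}$ of the corollary.

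The statement is essentially a direct transcription of Theorem~\ref{th:mont} and Corollary~\ref{cor:ds} through the two standard representations of the Schr\"odinger solution, so there is no substantive obstacle. The only care required is the bookkeeping: verifying that the two multiplicative phases leave the rearrangement invariant, and keeping track of the dilation $x\leftrightarrow x/t$ both in the target set (which produces $B(0,\tau/t)$ instead of $B(0,\tau)$) and in the $L^p$-norms of the test functions $\chi_{\Sigma/t}$ that determine the precise form of the constants in the statement.
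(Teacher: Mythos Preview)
Your proof plan is correct and follows essentially the same route as the paper, which simply asserts that the result comes from applying Theorem~\ref{th:mont} and Corollary~\ref{cor:ds} through the two representations \eqref{solshr1} and \eqref{solshr2}, with $\psi=\chi_\Sigma$, $\Omega=1$, and a change of variables. Your write-up is in fact more explicit than the paper's, and you correctly track the unimodular phases, the reflection/dilation of $\Sigma$, and the resulting constants; note that the corollary indeed gives a factor $(1+S)^{d/2}$, so the exponent $1$ appearing in part~(1) of the statement is a minor inconsistency in the paper rather than in your argument.
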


For the second part of each statement, we have used $\psi=\chi_\Sigma$ and $\Omega=1$ in Corollary 
\ref{cor:ds} and a trivial change of variables.

Let us remark that, using H\"older and Hausdorff-Young, we have for $1\leq q\leq +\infty$, $\dst\frac{1}{q}+\frac{1}{q'}=1$
\begin{eqnarray*}
\int_{B(0,\tau/t)}\bigl|\widehat{|v_0|^*}(\xi)\bigr|^2\,\mbox{d}\xi
&=&\int_{\R^d}\chi_{B(0,\tau/t)}\bigl|\widehat{|v_0|^*}(\xi)\bigr|^2\,\mbox{d}\xi\\
&\leq&\Bigl\|\chi_{B(0,\tau/t)}\Bigr\|_q\Bigl\|\bigl|\widehat{|v_0|^*}(\xi)\bigr|^2\Bigr\|_{q'}\\
&=&|B(0,\tau/t)|^{1/q}\Bigl\|\bigl|\widehat{|v_0|^*}(\xi)\bigr|\Bigr\|_{2q'}^2\\
&\leq& \frac{|\Sigma|^{1/q}}{t^{d/q}}\bigl\||v_0|^*\bigr\|_{\frac{2q}{q+1}}^2.
\end{eqnarray*}
Setting $p=\dst\frac{2q}{q+1}\in[1,2]$, it follows from \eqref{eq:stri} that
$$
\int_\Sigma |v(x,t)|^2\,\mbox{d}x\leq C\frac{|\Sigma|^{2-p}}{t^{d(2-p)}}\bigl\|v_0\bigr\|_{p}^2.
$$
This estimate can also be obtained directly from the standard dispersive estimate
$$
\left(\int_{\R^d} |v(x,t)|^q\,\mbox{d}x\right)^{1/q}\leq Ct^{-\frac{d}{2}\left(1-\frac{2}{q}\right)}\norm{v_0}_{q'}
$$
($\dst\frac{1}{q}+\frac{1}{q'}=1$, $q\geq 2$)
and H\"older's inequality. The estimate \eqref{eq:stri} is slightly more precise and also shows that the case of radial initial data is somehow the worst case.

\section{Concluding remarks}

For the clarity of exposition, we have chosen to deal only with
the $d$-dimensional radial decreasing rearrangement of functions.
An alternative would have be to deal with the $1$-dimensional (resp. $k$-dimensional)
symmetric decreasing rearrangement
defined as follows: for a set $E\subset\R^d$ of finite measure, we define
$E^\star=[-a,a]$ where $a=|E|/2$
(resp. $E^\star=B(0,r)\subset\R^k$ with $|B(0,r)|_k=|E|_d$). 
One can then define the symmetric decreasing rearrangement
of functions through the layer cake representation:
$$
|\ffi|^\star(x)=\int_0^{+\infty}\chi_{\{y\in\R^d\,:\ |\ffi(y)|>\lambda\}^\star}(x)\,\mbox{d}\lambda.
$$
This rearrangement as similar properties to the one we considered so far. 
One may then adapt directly the proofs of Theorems \ref{th:ds2} and \ref{th:mont}
to obtain the following results:

\begin{theorem}
Let $d\geq1$ be an integer. 
\begin{enumerate}
\item Let $\alpha>-1/2$, $\beta\geq 0$. Let $S,\Omega>0$ and let $\psi$
be as in Theorem \ref{th:ds2}. Then there exists a constant 
$C=C(d,\alpha,\beta)$ 
such that, for every $\ffi\in L^1(\R^d)$ with support of finite measure $S$,
for every $x,a\in\R^d$,
\begin{eqnarray*}
\abs{\int_{\R^d}\widehat{\ffi}(\xi)(1-|\xi-a|^2/\Omega^2)_+^\alpha
e^{2i\pi x\xi}\,\mathrm{d}\xi}\\
&&\hspace{-4cm}\leq\quad
C\psi(\Omega^d S)
\int_{\R}(1-|\xi|^2/\Omega^2)_+^\beta
\widehat{|\ffi|^\star}(\xi)\,\mathrm{d}\xi.
\end{eqnarray*}

\item There exists a constant $\kappa_d$ such that, for every $\Sigma\in\R^d$ of finite measure
ad every $\ffi\in L^2(\R^d)$,
$$
\int_\Sigma\bigl|\widehat{\ffi}(\xi)\bigr|^2\,\mbox{d}\xi\leq\kappa_d
\int_{-|\Sigma|/2}^{|\Sigma|/2}\bigl|\widehat{|\ffi|^\star}(\xi)\bigr|^2\,\mbox{d}\xi.
$$
\end{enumerate}
\end{theorem}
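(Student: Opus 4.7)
The plan is to transcribe the proofs of Theorems \ref{th:ds2} and \ref{th:mont} almost verbatim, with two modifications: the $d$-dimensional radial rearrangement $|\ffi|^*$ is replaced by the $1$-dimensional symmetric rearrangement $|\ffi|^\star$, and Hardy--Littlewood's Lemma \ref{lem:rear2} is replaced by its cross-dimensional version: for non-negative $f,g$ on $\R^d$ vanishing at infinity,
$$
\int_{\R^d}f(x)g(x)\,\mathrm{d}x\leq\int_{\R}|f|^\star(t)|g|^\star(t)\,\mathrm{d}t.
$$
This follows from the layer-cake formula and the fact that the $1$-dimensional level sets $\set{|f|^\star>s}$ are nested intervals centered at the origin, so that $\abs{\set{|f|^\star>s}\cap\set{|g|^\star>t}}_1=\min(\mu_f(s),\mu_g(t))$.

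For Part (1), I first reduce to $a=x=0$, $\Omega=1$ as in the proof of Theorem \ref{th:ds2}. Applying Parseval and the cross-dimensional Hardy--Littlewood above, and writing $|\ffi|^\star$ as a superposition of characteristic functions of intervals $[-s/2,s/2]$ with $s\leq S$, the estimate reduces to
$$
\int_0^{s/2}|\widehat{m_\alpha}|^\star(t)\,\mathrm{d}t\leq\psi(S)\int_0^{s/2}\widehat{m^{(1)}_\beta}(t)\,\mathrm{d}t\quad\mbox{for all }s\leq S,
$$
where $\widehat{m^{(1)}_\beta}(t)=C_\beta|t|^{-1/2-\beta}J_{1/2+\beta}(2\pi|t|)$ is the $1$-dimensional Bochner--Riesz kernel. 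The right-hand side is bounded below by a positive constant for $s$ large enough by Cooke's theorem applied with Bessel order $\nu=1/2+\beta\geq 1/2$; the hypothesis $\beta\geq 0$ is the $1$-dimensional analogue of $\beta\geq d/2-1$. For the left-hand side, the pointwise decay $|\widehat{m_\alpha}(x)|\leq C(1+|x|)^{-(d+1+2\alpha)/2}$ on $\R^d$ translates, via a direct level-set computation, into the $1$D decay $|\widehat{m_\alpha}|^\star(t)\leq C(1+|t|)^{-(d+1+2\alpha)/(2d)}$, and integration yields a bound of the correct order $\psi(S)$ in each of the three regimes for $\alpha$.

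For Part (2), I follow the proof of Theorem \ref{th:mont} with $\tau$ replaced by $T=|\Sigma|/2$ (so that $\Sigma^\star=[-T,T]$) and the $d$-dimensional kernel $K=\chi_{B(0,1)}*\chi_{B(0,1)}$ replaced by the triangular kernel $K^{(1)}(\xi)=(2-|\xi|)_+$. The rescaled kernel $K^{(1)}_T(\xi)=(1-|\xi|/T)_+$ is supported in $\Sigma^\star$, satisfies $K^{(1)}_T(0)=1$, and factors as $K^{(1)}_T=\widehat{(k^{(1)}_T)^2}$ with $k^{(1)}_T(x)=\sin(\pi xT)/(\sqrt{T}\pi x)$. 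The role of the Poisson lower bound on $\mathcal{J}_{d/2}$ is played here by the elementary estimate $\sin y\geq 2y/\pi$ on $[0,\pi/2]$, which yields $k^{(1)}_T(u)\geq (2/\pi)\sqrt{T}$ for $|u|\leq 1/(2T)$. Splitting $\widehat{\ffi}=f+g$ according to a level set of $|\ffi|$ of measure at most $2/T$, and splitting the double integral into the two regions used before (namely $|x|,|y|\leq 1/(4T)$, and $|x|\geq 1/T$ with $|x-y|\leq 1/(2T)$), completes the argument.

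The only non-routine verification lies in Part (1): rearranging a $d$-dimensional oscillating function onto $\R$ changes the effective decay rate from $|x|^{-(d+1+2\alpha)/2}$ to $|t|^{-(d+1+2\alpha)/(2d)}$, and one must check that the corresponding integral growth remains majorized by the $\psi$ inherited from Theorem \ref{th:ds2} in each of the three regimes. Part (2) is essentially bookkeeping of constants; the skeleton of the Montgomery-type argument is dimension-insensitive once the $1$D kernel $K^{(1)}$ is in place.
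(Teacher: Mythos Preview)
Your proposal is correct and is precisely the adaptation the paper has in mind; the paper itself gives no proof beyond the sentence ``One may then adapt directly the proofs of Theorems \ref{th:ds2} and \ref{th:mont}.'' Your cross-dimensional Hardy--Littlewood step, the reduction via the layer-cake representation of $|\ffi|^\star$, the use of Cooke's monotonicity for the $1$-dimensional Bessel integral $\int_0^s r^{-1/2-\beta}J_{1/2+\beta}(2\pi r)\,\mathrm{d}r$ (which indeed only needs $\beta\geq -1/2$, so $\beta\geq 0$ suffices), and the Fej\'er-kernel version of the Montgomery argument are all the right substitutions.

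One remark on Part (1): your level-set computation gives $|\widehat{m_\alpha}|^\star(t)\leq C(1+|t|)^{-(d+1+2\alpha)/(2d)}$, and integrating this over $[0,s/2]$ yields, in the regime $\alpha<(d-1)/2$, a growth of order $s^{(d-1-2\alpha)/(2d)}$ rather than $s^{(d-1-2\alpha)/2}$. This is \emph{strictly smaller} than the $\psi(S)$ inherited from Theorem \ref{th:ds2} when $d\geq 2$, so the stated inequality holds with room to spare; your phrase ``of the correct order $\psi(S)$'' should be read as ``majorized by $\psi(S)$'', which is all that is needed.
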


\bibliographystyle{cdraifplain}

\begin{thebibliography}{99}
\bibitem[AS]{AS}
\textsc{R. Askey \& J. Steinig}
\newblock{\em Some positive trigonometric sums.}
Trans. Amer. Math. Soc. {\bf 187} (1974), 295--307.\\[-8pt]

\bibitem[BH]{BH}
\textsc{J.\,J. Benedetto \& H.\,P. Heinig}
\newblock{\em Weighted Fourier inequalities: new proofs and generalizations.}
J. Fourier Anal. Appl. {\bf 9} (2003), 1--37.\\[-8pt]

\bibitem[Bu]{Bu}
\textsc{A. Burchard}
\newblock{\em Steiner symmetrization is continuous in $W\sp {1,p}$.}
Geom. Funct. Anal. {\bf 7} (1997), 823--860.\\[-8pt] 

\bibitem[Ci]{Ci}
\textsc{A. Cianchi}
\newblock{\em Second-order derivatives and rearrangements.}
Duke Math. J. {\bf 105} (2000), 355--385.\\[-8pt]

\bibitem[Co1]{Co1}
\textsc{R.\,G. Cooke}
\newblock{\em Gibbs' phenomenon in Fourier-Bessel series and integrals.}
Proc. London Math. Soc. {\bf 27} (1927), 171--192.\\[-8pt]

\bibitem[Co2]{Co2}
\textsc{R.\,G. Cooke}
\newblock{\em A monotonic property of Bessel functions.}
J. London Math. Soc. {\bf 12} (1937), 180--185.\\[-8pt]

\bibitem[DS1]{DSu}
\textsc{D.\,L. Donoho \& P.\,B. Stark}
\newblock{\em Rearrangements and smoothing.}
Tech. Rep {\bf 148} Dept. of Statist., Univ. of California, Berkeley, 1988.\\[-8pt]

\bibitem[DS2]{DS}
\textsc{D.\,L. Donoho \& P.\,B. Stark}
\newblock{\em A note on rearrangements, spectral concentration, and the zero-order prolate spheroidal wavefunction.}
IEEE Trans. Inform. Theory {\bf 39} (1993), 257--260.\\[-8pt]

\bibitem[Fe]{Fe}
\textsc{W. Feller}
\newblock{\em An introduction to probability theory and applications.}
Willey \& Sons, New York, 1971.\\[-8pt]
%

\bibitem[Ga]{Ga}
\textsc{G. Gasper}
\newblock{\em Positive integrals of Bessel functions.}
SIAM J. Math. Anal. {\bf 6} (1975), 868--881.\\[-8pt]

\bibitem[Gr]{Gr}
\textsc{L. Grafakos}
\newblock{\em Classical and Modern Fourier Analysis.}
Prentice Hall, 2004.\\[-8pt]

\bibitem[Ja]{Ja}
\textsc{Ph. Jaming}
\newblock{\em Nazarov's uncertainty principle in higher dimension.}
J. Approx. Theory {\bf 149} (2007), 611--630.\\[-8pt]

\bibitem[JT]{JT}
\textsc{M. Jodeit Jr \& A. Tochinsky}
\newblock{\em Inequalities for the Fourier Transform.}
Studia Math. {\bf 37} (1971), 245--276.\\[-8pt]

\bibitem[JS1]{JS1}
\textsc{W.\,B. Jurkat \& G. Sampson}
\newblock{\em On Maximal Rearrangement Inequalities for the Fourier Transform.}
Trans. Amer. Math. Soc. {\bf 282} (1984), 625--643.\\[-8pt]

\bibitem[JS2]{JS2}
\textsc{W.\,B. Jurkat \& G. Sampson}
\newblock{\em On Rearrangement and Weighted Inequalities for the Fourier Transform.}
Indiana Univ. Math. J. {\bf 33} (1984), 257--270.\\[-8pt]

\bibitem[Ka]{Ka}
\textsc{B. Kawohl}
\newblock{\em Rearrangements and convexity of level sets in PDE.}
Lecture Notes in Mathematics, {\bf 1150}. Springer-Verlag, Berlin, 1985.\\[-8pt]

\bibitem[Li]{Li}
\textsc{E.\,H. Lieb}
\newblock{\em Sharp constants in the Hardy-Littlewood-Sobolev and related inequalities.}
Ann. of Math. (2) {\bf 118} (1983), 349--374.\\[-8pt]

\bibitem[LL]{LL}
\textsc{E.\,H. Lieb \& M. Loss}
\newblock{\em Analysis}.
Graduate Studies in Mathematics, {\bf 14}. American Mathematical Society, Providence, RI, 1997.\\[-8pt]

\bibitem[Ma]{Ma}
\textsc{E. Makai}
\newblock{\em On a monotonic property of certain Sturm-Liouville functions.}
Acta Math. Acad. Sci. Hungar. {\bf 3}, (1952), 165--172.\\[-8pt]

\bibitem[MR]{MR}
\textsc{J.\,K. Misiewicz \& D.\,S.\,P. Richards}
\newblock{\em Positivity of integrals of Bessel functions.}
SIAM J. Math. Anal. {\bf 25} (1994), 596--601.\\[-8pt]

\bibitem[Mo]{Mo}
\textsc{H.\,L. Montgomery}
\newblock{\em A note on rearrangements of Fourier coefficients.}
Ann. Inst. Fourier (Grenoble) {\bf 26} (1976), 29--34.\\[-8pt]

\bibitem[Na]{Naz}
\textsc{F.\,L. Nazarov}
\newblock{\em Local estimates for exponential polynomials and their applications to inequalities of the uncertainty principle type. (Russian)}
Algebra i Analiz {\bf 5} (1993) 3--66; {\it translation in} St. Petersburg Math. J. {\bf 5} (1994) 663--717.\\[-8pt]

\bibitem[Si]{Si}
\textsc{G. Sinnamon}
\newblock{\em The Fourier transform in weighted Lorentz spaces.}
Publ. Mat. {\bf 47} (2003), 3--29.\\[-8pt]

\bibitem[St]{St}
\textsc{J. Steinig}
\newblock{\em On a monotonicity property of Bessel functions.}
Math. Z. {\bf 122} (1971), 363--365.\\[-8pt]

\bibitem[Wi]{Wid}
\textsc{D.\,V. Widder}
\newblock{\em The Laplace Transform.}
Princeton Univ. Press, Princeton, NJ, 1941.
\end{thebibliography}

\end{document}